\numberwithin{equation}{section}
\definecolor{dblue}{rgb}{0,0,0.45}
\definecolor{red}{rgb}{0.7,0,0}
\newtheorem{theorem}{Theorem}[section]
\newtheorem{lemma}[theorem]{Lemma}
\newtheorem*{lemma*}{Lemma}
\newtheorem{proposition}[theorem]{Proposition}
\theoremstyle{definition}
\newtheorem{remark}[theorem]{Remark}
\theoremstyle{remark}
\newcommand{\cA}{{\mathcal A}}
\newcommand{\cH}{{\mathcal H}}
\newcommand{\cK}{{\mathcal K}}
\newcommand{\cL}{{\mathcal L}}
\newcommand{\cV}{{\mathcal V}}
\newcommand{\cW}{{\mathcal W}}
\newcommand{\cX}{{\mathcal X}}
\newcommand{\cY}{{\mathcal Y}}
\newcommand{\la}{\langle}
\newcommand{\ra}{\rangle}
\newcommand{\nn}{\nonumber}
\date{}
\begin{document}

\title{Positivity of the density for rough differential equations}
\author{  Yuzuru \textsc{Inahama} and Bin \textsc{Pei}
}
\maketitle

\begin{abstract}
Due to recent developments of Malliavin calculus 
for rough differential equations, it is now known that,
under natural assumptions, the law of 
 a unique solution at a fixed time has a smooth density function. 
Therefore, it is quite natural to ask whether or when 
the density is strictly positive.
In this paper we study this problem from the viewpoint of 
Aida-Kusuoka-Stroock's general theory.
\end{abstract}

\section{Introduction}
In this paper we study the following rough differential equation
(RDE) driven by a Gaussian rough path ${\bf w}$:
\begin{equation}\label{rde.intro}
dy_t = \sum_{i=1}^d  V_i ( y_t) dw_t^i + V_0 ( y_t) dt
\qquad
\qquad
\mbox{with \quad $y_0=a \in {\mathbb R}^e$.}
\end{equation}
Here, $V_i~(0\le i\le d)$ are nice vector fields on ${\mathbb R}^e$.
The main example of Gaussian rough paths we have in  mind
is fractional Brownian rough path (fBRP) with Hurst parameter 
$H \in (1/4, 1/2]$.
We are interested in the law of $y_t$ for a fixed time $t$. 
In particular, we take up the problem of positivity of 
the density  of the law (when the density with respect to the 
Lebesgue measure exists).

Let us review results for a usual stochastic differential equation
(which coincides with \eqref{rde.intro} driven by 
Brownian rough path).
Thanks to Malliavin calculus, the law of $y_t$ has a
smooth density 
if $V_i~(0\le i\le d)$ satisfy H\"ormander's condition
at the starting point $a$.
Positivity of the density was first studied by 
Ben Arous and L\'eandre \cite{bl2}.
Later Aida, Kusuoka and Stroock \cite{aks} generalized 
the positivity theorem for very general Wiener functionals.
(The key theorem is \cite[Theorem 2.8]{aks}.
Fortunately, its proof is not long.)

Malliavin calculus for RDE
\eqref{rde.intro} was established by \cite{chlt, ina2}.
For a class of Gaussian rough paths including 
fBRP with Hurst parameter $H \in (1/4, 1/2]$,
it is now known that the law of $y_t$ has a smooth density 
under H\"ormander's condition at the starting point $a$.
Therefore, it is quite natural to ask whether or when 
the density is strictly positive.
Concerning this problem, there are two results \cite{bnot, got2}, 
in both of which ${\bf w}$ is
 fBRP with Hurst parameter $H \in (1/4, 1/2]$.
In these works, they proved everywhere-positivity of the density 
under the uniform ellipticity or the uniform H\"ormander condition
along the lines of \cite{bl2}.

The aim of this paper is to look at this problem from 
the viewpoint of Aida-Kusuoka-Stroock's general theory. 
It is true that these two existing results are quite nice,
but it is also true that 
we can still improve them by using this theory.

The organization of this paper is as follows.
In Section \ref{sec.aks} we review 
Aida-Kusuoka-Stroock's theory on an abstract Wiener space.
In Section \ref{sec.det_RP} we collect some known
deterministic results from rough path theory,
which will be used in the proof of our main result.
Following \cite[Section 15]{fvbk},
we recall basic results on Gaussian rough paths 
in Section \ref{sec.grp}.
Section \ref{sec.core} is the core of this paper.
We prove that Lyons-It\^o map, i.e. the solution map 
for RDE \eqref{rde.intro} is twice $\cK$-differentiable
in the sense of \cite{aks}.
(See Proposition \ref{pr.Kdiff}. This is the key point in proving the positivity.)
In Section \ref{sec.cmpr} we present our main result 
in Theorem \ref{thm.pstv} and compare it with the
preceding results in Remark \ref{rem.kikaku}.

Throughout this paper the following notation will be used.
We work on the time interval $[0,T]$, 
where $T \in (0,\infty)$ is arbitrary but fixed.
For a Banach space $\cX$, 
the set of $\cX$-valued continuous paths over $[0,T]$
is denoted by $C(\cX)$.
For $a \in \cX$, the subset of continuous paths
that start at $a$ is $C_a (\cX)$.
For $p \ge 1$, set of $\cX$-valued continuous paths over $[0,T]$
with finite $p$-variation
is denoted by $C^{p -{\rm var}} (\cX)$.
In a similar way, $C_a^{p -{\rm var}} (\cX)$ is defined.
Let $f \colon U \to \cY$, where $U$ is an open subset of $\cX$
and $\cY$ is another Banach space.
For $0 \le k <\infty$,  $f$ is said to be of $C_b^k$ on $U$ if
$f$ is a bounded $C^k$-map from $U$ to $\cY$ 
whose derivatives up to order $k$ are all bounded.
The set of such $f$ is denoted by $C_b^k(U, \cY)$.
The $C_b^k$-norm of $f$ is given by
 $\sum_{i=0}^k \sup_{x \in U}\| D^i f (x) \|_{\cY}$.
If $f$ is of $C_b^k$ for all $0 \le k <\infty$,  
$f$ is said to be of $C_b^\infty$ on $U$.

\section{Review of abstract Wiener space
and $\cK$-regularity}
\label{sec.aks}

In this section we recall Aida-Kusuoka-Stroock's 
result on the positivity of the density
for non-degenerate Wiener functionals (see \cite{aks}).

In this section,  $(\cW, \cH, \mu)$ is 
an abstract Wiener space in the sense of \cite{gross1}. That is, ~$(\cW, \|\cdot\|_{\cW})$~is a separable Banach space, ~$(\cH, \|\cdot\|_{\cH})$~ is a separable Hilbert space, $\cH$~is a dense subspace of~$\cW$~and the inclusion map is continuous, and~$\mu$~is the (necessarily unique) probability measure on~$(\cW, \mathcal{B}_{\cW})$~with the property that
\begin{equation}\label{abspro}
\int_{\cW}\exp\Bigl(\sqrt{-1}_{\cW^*}\langle \lambda, w \rangle_{\cW}\Bigr)\mu (d w)=\exp\Bigl(-\frac{1}{2}\|\lambda\|^2_{\cH}\Bigr),
\qquad
\qquad
\lambda \in \cW^* \subset \cH^*,
\end{equation}
where we have used the fact that $\cW^*$ becomes a dense subspace of $\cH$ when we make the natural identification between $\cH^*$ and $\cH$ itself. 
Hence, $\cW^* \hookrightarrow \cH^* =\cH \hookrightarrow\cW$
and both inclusions are continuous and dense.
We denote by $\{ \langle k, \bullet \rangle \colon k\in \cH\}$ 
the family of centered Gaussian random variable 
defined on $\cW$ indexed by $\cH$
(i.e. the homogeneous Wiener chaos of order $1$).
If $\langle k, \bullet \rangle_{\cH} \in \cH^*$ 
extends to an element of $\cW^*$,
then the extension coincides with the  random 
variable $\langle k, \bullet \rangle$.
We also denote by $\tau_k \colon \cW \to \cW$ the translation 
$\tau_k (w) =w+k$.

For a finite dimensional subspace $K$ of $\cH$,   
$P_K\colon \cH \to K$ stands for the orthogonal projection
and we write $P_K^\perp = {\rm Id}_\cH - P_K$. 
This projection naturally extends to $\bar{P}_K\colon \cW \to K$
as follows:
\[
\bar{P}_K (w) = \sum_{i=1}^{\dim K} \langle e_i, w \rangle e_i,
\]
where $\{ e_i\}_{i=1}^{\dim K}$ is an orthonormal basis of $K$.
(This right hand side
 is independent of the choice of $\{ e_i\}$.)
We set $\bar{P}_K^\perp =  {\rm Id}_\cW - \bar{P}_K$.

Now we recall the definitions of $\cK$-continuity,
$\cK$-regularity, uniformly $\cK$-regularity, 
and $l$-times $\cK$-regular differentiability, 
which were first introduced by \cite{aks}. 
Note that in these definitions, functions and maps on $\cW$
are everywhere-defined ones (not equivalence classes 
with respect to $\mu$).

Assume that $\cK=\{K_n\}^{\infty}_{n=1}$ is a non-decreasing, countable exhaustion of $\cH$ by finite dimensional subspaces, 
that is, $K_n \subset K_{n+1}$ for all $n$
and $\cup_{n=1}^\infty K_n$ is dense in $\cH$.
Set $P_n=P_{K_n}$, define $\bar P_n,$, $\bar P_n^{\perp}$ 
accordingly. We say that a map $F$ from $\cW$ into 
a Polish space $(E, \rho_E)$ is {\it $\cK$-continuous} 
if it is measurable and, 
for each $n\in\mathbb{N} :=\{1,2, \ldots\}$, 
there is a measurable map $F_n\colon\cW\times K_n\longmapsto E$ with the properties that $F\circ \tau_k=F_n(\cdot,k)$ (a.s. $\mu$)
 for each $k\in K_n$ and $k\in K_n \longmapsto F_n(w,k)\in E$ is continuous for each $w \in \cW$. Given a $\cK$-continuous 
 map $F$,  we set
\begin{eqnarray}
 F_n^{\perp}(w,k) = F_n(w,-\bar P_n (w) +k)
\qquad {\rm for}~n\in \mathbb{N} ~{\rm and}~k\in K_n.
\end{eqnarray}

Given a measurable map $F \colon\cW\rightarrow E$, we will say that $F$ is {\it $\cK$-regular} if $F$ is $\cK$-continuous and there is a continuous map
$\tilde {F} \colon\cH \rightarrow E$ such that
\begin{eqnarray}\label{rhoe}
 \lim_{n \rightarrow \infty} \mu \bigg(\bigg\{w\colon\rho_E(\tilde{F}\circ \bar{P}_n(w),F (w))\vee
  \rho_E(\tilde{F}(h),F_n^{\perp}(w, P_n (h)))
 \geq \epsilon\bigg\}\bigg)=0
 \end{eqnarray}
holds for every $\epsilon>0$ and $h\in \cH$. 
In this case $\tilde {F}$ will be called 
a $\cK$-regularization of $F$.

If  $F$ is a map from $\cW$ into a Polish space $E$, we will say that it is {\it uniformly $\cK$-regular} if it is $\cK$-regular and (\ref{rhoe}) can be replaced by the condition that
\begin{eqnarray}
 \lim_{n \rightarrow \infty} \mu \bigg(\bigg\{w &\colon& \sup_{k\in K_m, \|k\|_{\cH}\leq r}\rho_E(\tilde{F}(\bar{P}_n(w)+k),F_{m\vee n}(w,k)) \cr
 &&\vee \rho_E(\tilde{F}(h+k), 
 F^{\perp}_{m\vee n}(w, P_n(h)+k)) \geq \epsilon\bigg\}\bigg)=0
 \label{rhoe2}
  \end{eqnarray}
for every $m\in \mathbb{N}, r>0, \epsilon>0$ and $h\in \cH$. 

Let $E$ be a separable Banach space 
and $F$ be a map from $\cW$ into $E$.
Given $l\in \mathbb{N}$ we will say that $F$ is 
{\it $l$-times $\cK$-regularly differentiable} 
if $F$ is uniformly $\cK$-regular, $F_n (w,\cdot)$ is $l$-times continuously Fr\'{e}chet differentiable on $K_n$ for each $n\in \mathbb{N}$ and $w \in \cW$, $\tilde{F}$ is $l$-times continuously 
Fr\'{e}chet differentiable 
on $\cH$, and  (\ref{rhoe2}) can be replaced by the condition that
\begin{eqnarray}\label{rhoe3}
 \lim_{n \rightarrow \infty} \mu \bigg(\bigg\{w &\colon& \|\tilde{F}(\bar{P}_n(w)+\bullet)-F_{m\vee n}(w,\bullet))\|_{C_{b}^l (B_{K_m}(0,r), E)} \cr
 &&\vee \|\tilde{F}(h+\bullet)-F^{\perp}_{n}(w, P_n(h)+\bullet)\|_{C_{b}^l (B_{K_m}(0,r), E)} \geq \epsilon\bigg\}\bigg)=0
 \end{eqnarray}
for every $m\in \mathbb{N}, r>0, \epsilon >0$ and $h\in \cH$. 
Here, $B_{K_m}(0,r)=\{ k \in K_m \colon \|k\|_{\cH}< r \}$.

The following theorem is \cite[Theorem 2.8]{aks}
and plays a key role in this paper.
It is a quite general result on the positivity of the density 
function of the law of a non-degenerate Wiener functional.
(In this theorem, any choice of $\tilde{F}$ and $\tilde{G}$ will do.
If $G \equiv 1$, then $f$ is the density 
function of the law of $F$ on ${\mathbb R}^e$.)
\begin{theorem}\label{aks>0}
Let $F\colon\cW\rightarrow \mathbb{R}^e$, $e \in \mathbb{N}$, and 
$G\colon\cW\rightarrow [0,+\infty)$ be functions which are infinitely differentiable in the sense of the Malliavin calculus for $(\cW,\cH,\mu)$, and assume that the associated Malliavin covariance matrix 
$DF  \cdot D F ^T = \{\langle DF^i (w), DF^j (w)\rangle_{\cH} \}_{1 \le i,j \le e}$ 
is non-degenerate in the Malliavin sense, namely,
\begin{equation}\label{cond.nondeg}
(\det (DF \cdot D F ^T))^{-1}\in
 \bigcap_{p\in[1,+\infty)} L^p(\cW; \mu).
 \end{equation}
Here, $D$ stands for the $\cH$-derivative. 
Then, there exists a unique non-negative function
$f\in C^{\infty}(\mathbb{R}^e, \mathbb R)$ with the properties that $f$ has rapidly decreasing derivatives of all orders and 
\[
\int_{\cW}(\phi\circ F) (w) G (w)
\mu (dw)
=\int_{\mathbb{R}^e}\phi (x)  f (x) dx, 
\qquad
\phi \in C_b(\mathbb{R}^e, \mathbb R).
\]
Assume further that $F$ is twice $\cK$-regularly differentiable 
and $G$ is $\cK$-regular with their $\cK$-regularizations 
$\tilde{F}$ and $\tilde{G}$, respectively. 
Then,  for $y\in \mathbb{R}^e$, the following  are equivalent:
\begin{itemize}
\item
$f(y)>0$.
\item
There exists $h \in \cH$ such that 
$D\tilde{F}(h)\colon \cH\rightarrow \mathbb{R}^e$ has rank $e$, 
$\tilde{F}(h)=y$ and 
$\tilde{G}(h)>0$. 
\end{itemize}
\end{theorem}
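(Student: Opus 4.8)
The plan is to establish the two assertions separately, the second — the equivalence — being the substantial part; the first part uses no $\cK$-regularity.

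\emph{Existence and smoothness of $f$.} By the non-degeneracy \eqref{cond.nondeg} and the Malliavin smoothness of $F$ and $G$, the iterated integration-by-parts formula produces, for every multi-index $\alpha$, a random variable $H_\alpha\in\bigcap_{p\ge1}L^p(\cW;\mu)$ — built from $DF$, $D^2F$, $DG$ and $(\det DF\cdot DF^T)^{-1}$ — with $\mathbb{E}[(\partial_\alpha\phi)(F)\,G]=\mathbb{E}[\phi(F)\,H_\alpha]$ for all $\phi\in C_b^\infty(\R^e,\R)$. Then $f(x):=\mathbb{E}\big[\mathbf{1}_{\{F^1>x^1,\dots,F^e>x^e\}}H_{(1,\dots,e)}\big]$ is the candidate density: differentiating under the expectation gives $f\in C^\infty(\R^e,\R)$ with each $\partial_\alpha f$ again of the form $\mathbb{E}[\mathbf{1}_{\{F>x\}}H_\beta]$; since $F\in\bigcap_pL^p$ forces $\mu(|F|\ge R)$ to decay faster than any power, $f$ and all its derivatives are rapidly decreasing; and the identity $\int\phi f\,dx=\mathbb{E}[\phi(F)G]$, together with uniqueness, follows by a monotone-class argument.

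\emph{Sufficiency of the Cameron--Martin point.} Suppose $h\in\cH$ with $\tilde F(h)=y$, $\operatorname{rank}D\tilde F(h)=e$ and $\tilde G(h)>0$. Since $f$ is continuous, it suffices to produce $\rho_0,c>0$ with $\int_{B(y,\rho)}f\,dx\ge c\rho^e$ for all $\rho\le\rho_0$; dividing by the volume of $B(y,\rho)$ and letting $\rho\downarrow0$ then gives $f(y)>0$. I would pick $\ell_1,\dots,\ell_e$ in some $K_m$ whose images under $D\tilde F(h)$ span $\R^e$; by the quantitative inverse function theorem, $t\mapsto\tilde F\big(h+\sum_it_i\ell_i\big)$ covers a ball $B(y,2\rho_0)$ as $t$ runs over a fixed ball in $\R^e$, with a covering constant depending only on the least singular value of $D\tilde F(h)$ and a $C^2$-bound for $\tilde F$ on a fixed ball around the \emph{fixed} point $h$, and, after shrinking, $\tilde G\big(h+\sum_it_i\ell_i\big)>0$ there. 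Next I would invoke the anchored (second) terms of \eqref{rhoe3} with $l=2$ and of \eqref{rhoe}: for $n$ large, on a set of $w$ of $\mu$-measure $\ge\tfrac12$, the map $k\mapsto F_n^\perp(w,P_n(h)+k)$ is as $C^2$-close to $\tilde F(h+\bullet)$ on $B_{K_m}(0,r)$ as we wish — so it still covers $B(y,\rho_0)$ submersively with a uniform constant and carries $k=0$ within $\rho_0$ of $y$ — while $G_n^\perp(w,P_n(h))$ lies within $\tilde G(h)/2$ of $\tilde G(h)$. Writing $F_n^\perp(w,j)=F(w-\bar{P}_n(w)+j)$ a.s.\ and factoring $\mu=\mu_{K_n}\otimes\mu_{K_n^\perp}$ along $\cW=K_n\oplus\bar{P}_n^\perp\cW$, the coarea formula yields, for a positive $\mu_{K_n^\perp}$-mass of the fibre variable $w^\perp$, that $\{k\in B_{K_m}(0,r):F(w^\perp+P_n(h)+k)\in B(y,\rho)\}$ has Lebesgue measure $\ge c_1\rho^e$, on part of which $G>0$; combined with boundedness below of the Gaussian density of $\mu_{K_n}$ on this bounded region and with Fubini, this gives $\mathbb{E}[\mathbf{1}_{B(y,\rho)}(F)\,G]\ge c\rho^e$. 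The fiddly point is that $G$ is only $\cK$-regular, so positivity of $G$ must be propagated using continuity of $\tilde G$ together with the single-point convergence in \eqref{rhoe}.

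\emph{Necessity, and the main obstacle.} If $f(y)>0$, continuity of $f$ gives $\mathbb{E}[\mathbf{1}_{B(y,\rho)}(F)\,G]>0$, hence $\mu\big(\{F\in B(y,\rho/2)\}\cap\{G>0\}\big)>0$, for every $\rho>0$. On the full-measure event $\det(DF\cdot DF^T)>0$ the vectors $DF^1(w),\dots,DF^e(w)\in\cH$ are linearly independent, so their $K_m$-projections are too for $m=m(w)$ large; differentiating \eqref{rhoe3} (with $l=1$, which follows from the $l=2$ hypothesis) at $k=0$ shows that, for $n$ large and $w$ off a small set, $D\tilde F(\bar{P}_n(w))|_{K_m}$ is close to $k\mapsto\langle k,DF(w)\rangle_\cH$ on $K_m$, whence $\operatorname{rank}D\tilde F(\bar{P}_n(w))=e$. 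Combining with $\tilde F\circ\bar{P}_n(w)\to F(w)$ and $\tilde G\circ\bar{P}_n(w)\to G(w)$ (first terms of \eqref{rhoe}), for a positive-measure set of $w$ in the above event and $n$ large the point $h_n:=\bar{P}_n(w)$ satisfies $\tilde F(h_n)\in B(y,\rho)$, $\operatorname{rank}D\tilde F(h_n)=e$ and $\tilde G(h_n)>0$; one then slides along the submersion of $\tilde F$ near $h_n$ to a nearby $h$ with $\tilde F(h)=y$ exactly, $\operatorname{rank}D\tilde F(h)=e$ (an open condition) and $\tilde G(h)>0$ (continuity of $\tilde G$). I expect this last slide to be the genuine obstacle: passing from $\tilde F(h_n)\approx y$ to $\tilde F(h)=y$ requires the covering radius of the submersion at $h_n$ to beat $\rho$, i.e.\ lower bounds on the least singular value of $D\tilde F(h_n)$ and a $C^2$-bound for $\tilde F$ near $h_n$ that are uniform in $n$ — whereas $h_n=\bar{P}_n(w)$ escapes to infinity in $\cH$. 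Circumventing this is precisely the role of the anchored ``$h+\bullet$'' terms in \eqref{rhoe}--\eqref{rhoe3}; organizing that interplay is where the real work lies, whereas the product-space computation in the sufficiency direction is routine once the factorization is set up. All of this is \cite[Theorem 2.8]{aks}, whose argument I would follow.
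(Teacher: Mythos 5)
First, a point of order: the paper does not prove this statement at all --- Theorem \ref{aks>0} is imported verbatim as \cite[Theorem 2.8]{aks} and used as a black box, so there is no in-paper proof to compare against. Judged as a standalone argument, your sketch has the right architecture and your first part (integration by parts, $f(x)=\mathbb{E}[\mathbf{1}_{\{F>x\}}H_{(1,\dots,e)}]$, rapid decrease via Chebyshev and Cauchy--Schwarz) is the standard complete proof. The sufficiency direction is also essentially sound: the ``fiddly point'' you flag about $G$ being only $\cK$-regular can be closed by applying \eqref{rhoe} at $h+k$ for every $k$, integrating the resulting convergence in measure over $k\in B_{K_m}(0,r_0)$ (dominated convergence plus Fubini), so that for large $n$ the Lebesgue measure of $\{k: G_n^\perp(w,P_n(h)+k)<\tilde G(h)/4\}$ is smaller than the $c_1\rho^e$ produced by the submersion, with positive probability.

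The genuine gap is in the necessity direction, and it is twofold. (i) You attribute the rescue of the degenerating sequence $h_n=\bar P_n(w)$ to the ``anchored $h+\bullet$'' (second) terms of \eqref{rhoe}--\eqref{rhoe3}; but those are anchored at a \emph{fixed, pre-chosen} $h\in\cH$, which you do not yet have in this direction. What actually supplies uniform-in-$n$ $C^2$-bounds and a uniform lower bound on the least singular value near $h_n$ is the \emph{first} term of \eqref{rhoe3}, which compares $\tilde F(\bar P_n(w)+\bullet)$ with the fixed limit $F_{m\vee n}(w,\bullet)=F(w+\bullet)$ uniformly on $B_{K_m}(0,r)$; the second terms are the ones doing the work in sufficiency. (ii) Even granting that, the slide from $\tilde F(h_n)\in B(y,\rho)$ to an exact solution is not closed: the covering radius $c_w$ of the submersion near $h_n$ depends on $w$, while $\rho$ must be chosen before $w$, so one must rule out that all $w$ with $F(w)$ near $y$ have $c_w\ll\rho$; and the corrected point $h_n+k_n$ must still satisfy $\tilde G(h_n+k_n)>0$, which does not follow from mere continuity of $\tilde G$ plus single-point convergence $\tilde G(h_n)\to G(w)$, since $h_n$ diverges in $\cH$ and $\tilde G$ has no uniform modulus of continuity there. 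These are precisely the points where the AKS proof passes to the finite-dimensional conditional densities on the fibres of $\bar P_n^\perp$ and argues there; your sketch names the obstacle honestly but does not overcome it, so as written the proposal is a (correct) reduction to \cite[Theorem 2.8]{aks} --- exactly what the paper does --- rather than a proof of it.
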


\section{Some deterministic results from rough path theory}
\label{sec.det_RP}

In this and the next sections we recall basic results 
on rough paths and RDEs.
In this section we summarize deterministic facts 
which will be used later.
In what follows we will always
assume $2 \le p < 4$, $1 \le q <2$.
The integer part of $p$ is denoted by $[p]$.

The geometric rough path space with $p$-variation topology 
over ${\mathbb R}^d$ is denoted by 
$G\Omega_p ({\mathbb R}^d)$.
An element of $G\Omega_p ({\mathbb R}^d)$ is denoted by
${\bf x} = ({\bf x}^1, \ldots, {\bf x}^{[p]})
= ({\bf x}^1_{s,t}, \ldots, {\bf x}^{[p]}_{s,t})_{0 \le s \le t \le T}$. 
Recall that the $p$-variation topology is induced by
the following variation norms:
\[
\|{\bf x}^i \|_{p/i -{\rm var}}
:=
\sup_{0=t_0<\cdots<t_K=T}
				\Bigl(
					\sum_{k=1}^K
						|{\bf x}^i_{t_{k-1},t_k}|^{p/i}
				\Bigr)^{i/p},
				\qquad 
				 1 \le i \le [p].
				\]
Here, $\{0=t_0<\cdots<t_K=T\}$ runs over all finite 
partition of $[0,T]$.
For more details, see \cite{fvbk} for example.

Now we introduce an RDE.
Recall that an RDE itself is deterministic.
In this work we only treat the first level path of 
the solution of an RDE and
therefore we simply call it a solution of the RDE.

Let $V_{i}\colon {\mathbb R}^e \to {\mathbb R}^e$ be a vector field on ${\mathbb R}^e$
with sufficient regularity ($0 \le i \le d$)
and 
let $G\Omega_p ({\mathbb R}^d)$ be the geometric rough path space over ${\mathbb R}^d$
with $p$-variation topology ($2 \le p < 4$).
We consider the following 
RDE driven by ${\bf x} \in G\Omega_p ({\mathbb R}^d)$:
\begin{equation}\label{rde.def}
dy_t = \sum_{i=1}^d  V_i ( y_t) dx_t^i + V_0 ( y_t) dt
\qquad
\qquad
\mbox{with \quad $y_0=a \in {\mathbb R}^e$.}
\end{equation}
If $V_i$'s are of $C_b^{[p] +1}$, then a unique solution 
$y= y({\bf x})$ exists,
which is denoted by $\Phi ({\bf x})$.
Moreover, $\Phi\colon G\Omega_p ({\mathbb R}^d)
\to C_a^{p -{\rm var}} ({\mathbb R}^e)$ is locally 
Lipschitz continuous, that is,
Lipschitz continuous on every bounded subset of 
$G\Omega_p ({\mathbb R}^d)$.
This map is called Lyons-It\^o map (associated with $V_i$'s).
We remark that $\Phi$ actually takes values in 
$C_a^{p -{\rm var}} ({\mathbb R}^e) \cap C^{0,p -{\rm var}} ({\mathbb R}^e)$.
Here, $C^{0,p -{\rm var}} ({\mathbb R}^e)$ is a separable 
Banach subspace 
of $C^{p -{\rm var}} ({\mathbb R}^e)$ defined as the closure of the set of 
all ${\mathbb R}^e$-valued $C^1$-path.

Let $1 \le q <2$. 
For  $h \in C_0^{q -{\rm var}} ({\mathbb R}^d)$, 
the ordinary differential equation in the sense 
of Young integral (Young ODE) that corresponds to 
\eqref{rde.def} is given as follows:
\begin{equation}\label{ode.def}
dy_t = \sum_{i=1}^d  V_i ( y_t) dh_t^i + V_0 ( y_t) dt
\qquad
\qquad
\mbox{with \quad $y_0=a \in {\mathbb R}^e$.}
\end{equation}
If $V_i$'s are of $C_b^{2}$, then a unique solution 
$y$ exists,
which is denoted by $\Psi (h)$.
Under the same condition, 
$\Psi\colon C_0^{q -{\rm var}} ({\mathbb R}^d) \to 
C_a^{q -{\rm var}} ({\mathbb R}^e)$ is locally Lipschitz continuous.

\bigskip

Using Young integration, we define ${\bf h} \in G\Omega_p ({\mathbb R}^d)$ by 
\[
{\bf h}^i_{s,t} = \int_{s \le u_1 \le \cdots \le u_i \le t}   
dh_{u_1} \otimes \cdots  \otimes dh_{u_i},
\qquad
0\le s \le t \le T, \,\, 1 \le i \le [p].
\]
We often write ${\bf h} = \cL (h)$, too.
The lift map $\cL\colon C_0^{q -{\rm var}} ({\mathbb R}^d)
\to G\Omega_p ({\mathbb R}^d)$ is also 
locally Lipschitz continuous, injective
and its image is dense.
As one can easily guess, $\Psi (h) = \Phi ({\bf h})$ holds. 
In this sense, RDE \eqref{rde.def} generalizes 
Young ODE \eqref{ode.def}.

Under the condition that $1/p +1/q >1$, 
there exists a translation on the geometric rough path space
which is compatible 
with the usual translation on the usual path space. 
It is called Young translation 
$T \colon G\Omega_p ({\mathbb R}^d) \times 
C_0^{q -{\rm var}} ({\mathbb R}^d) \to G\Omega_p ({\mathbb R}^d)$ 
and is characterized as a unique continuous map that satisfies 
$
T_h ({\bf k}) = \cL (h+k)$ for every 
$h, k \in C_0^{q -{\rm var}} ({\mathbb R}^d)$.
$T_h ({\bf x})= T_h {\bf x}$ should be viewed as the translation of 
${\bf x}$ by $h$.
(See \cite[Subsection 9.4]{fvbk} for example.)

Next, let us see how  derivatives of $\Psi$ look like.
For brevity, we write $\sigma = [V_1, \ldots, V_d]$ and $b =V_0$ 
and view them as an $e \times d$ matrix-valued 
and an ${\mathbb R}^e$-valued function, respectively.
Then, \eqref{ode.def} simply reads
$dy_t = \sigma ( y_t) dh_t + b ( y_t) dt$.
By formal differentiation of
\eqref{ode.def} in the direction 
of $l \in C_0^{q -{\rm var}} ({\mathbb R}^d)$, 
$D_l y_t$ should satisfy the following Young ODE
(if it exists):
\begin{equation}
d \xi_t^{[1]}
= 
\nabla \sigma (y_t ) \la  \xi_t^{[1]}, dh_t \ra
+ 
\nabla b(y_t ) \la  \xi_t^{[1]}\ra dt
+
\sigma (y_t ) dl_t
\quad
\mbox{with \quad $\xi_0^{[1]}  =0\in {\mathbb R}^e$.}
\label{heu_1.eq}
\end{equation}
Here, $D_l$ stands for the directional derivative on 
$C_0^{q -{\rm var}} ({\mathbb R}^d)$ in the direction $l$
and 
$\nabla$ stands for the standard gradient on ${\mathbb R}^e$.
In  a similar way, $D^2_{l,l} y_t$ should satisfy the 
following Young ODE (if it exists):
\begin{align}
d \xi_t^{[2]}
&= 
\nabla \sigma ( y_t) \la \xi_t^{[2]}, dh_t \ra 
+
\nabla b ( y_t) \la \xi_t^{[2]} \ra  dt
+
\nabla^2 \sigma ( y_t) \la \xi_t^{[1]},  \xi_t^{[1]}, dh_t\ra
\nn\\
&
+
2\nabla \sigma ( y_t) \la \xi_t^{[1]},dl_t\ra 
+
\nabla^2 b ( y_t) \la \xi_t^{[1]},  \xi_t^{[1]}\ra dt
\quad
\mbox{with  \quad $\xi_0^{[2]}=0\in {\mathbb R}^e$.}
\label{heu_2.eq}
\end{align}

If $V_i$'s are of $C_b^{4}$,  the following facts are known
to hold (see \cite{ll, ina1} for example).
The system of Young ODEs 
\eqref{ode.def}, \eqref{heu_1.eq} and  \eqref{heu_2.eq}
has a unique global solution for every $h$ and $l$. 
(We write $\xi_t^{[1]}(h,l)$ and $\xi_t^{[2]}(h,l)$ when necessary.)
The mapping 
\[
C_0^{q -{\rm var}} ({\mathbb R}^d\oplus {\mathbb R}^d)\ni
(h,l) \mapsto (y(h), \xi^{[1]}(h,l), \xi^{[2]}(h,l))
\in
C_0^{q -{\rm var}} (({\mathbb R}^e)^{\oplus 3} 
)
\]
is locally Lipschitz continuous.
Moreover, 
 $\Psi$ is of Fr\'echet-$C^2$ and 
$D_l \Psi (h)$ and $D^2_{l,l} \Psi (h)$ coincide with 
$\xi^{[1]}(h,l)$ and $\xi^{[2]}(h,l)$, respectively.

Now we go back to RDEs. 
We assume that $V_i$'s are of $C_b^{[p]+3}$.
The RDEs driven by ${\bf x}\in G\Omega_p ({\mathbb R}^d)$ 
and $l\in C_0^{q -{\rm var}} ({\mathbb R}^d)$
which correspond to \eqref{heu_1.eq}--\eqref{heu_2.eq}
are given as follows:
\begin{align}
d \xi_t^{[1]}
&= 
\nabla \sigma (y_t ) \la  \xi_t^{[1]}, dx_t \ra
+ 
\nabla b(y_t ) \la  \xi_t^{[1]}\ra dt
+
\sigma (y_t ) dl_t
\quad
\mbox{with \quad $\xi_0^{[1]}  =0\in {\mathbb R}^e$.}
\label{heu_3.eq}
\\
d \xi_t^{[2]}
&= 
\nabla \sigma ( y_t) \la \xi_t^{[2]}, dx_t \ra 
+
\nabla b ( y_t) \la \xi_t^{[2]} \ra  dt
+
\nabla^2 \sigma ( y_t) \la \xi_t^{[1]},  \xi_t^{[1]}, dx_t\ra
\nn\\
&
+
2\nabla \sigma ( y_t) \la \xi_t^{[1]},dl_t\ra 
+
\nabla^2 b ( y_t) \la \xi_t^{[1]},  \xi_t^{[1]}\ra dt
\quad
\mbox{with  \quad $\xi_0^{[2]}=0\in {\mathbb R}^e$.}
\label{heu_4.eq}
\end{align}
It is known that the system of RDEs 
\eqref{rde.def}, \eqref{heu_3.eq} and \eqref{heu_4.eq}
has a unique global solution for every $({\bf x}, l)$.
(See \cite{ina2} or \cite[Section 10.7]{fvbk}  for example.)
When we specify the driver, we will write  
$\xi_t^{[j]} = \xi_t^{[j]} ({\bf x}, l)$, $j=1,2$.
For $h\in C_0^{q -{\rm var}} ({\mathbb R}^d)$, we have
$\xi^{[j]} (h,l)= \xi^{[j]} ({\mathcal L} (h), l)$, $j=1,2$.
Since explosion never happens, Lyons' continuity theorem 
still holds for this system of RDEs,
namely, the following map is locally Lipschitz continuous:
\begin{equation}\label{eq.der.Lip.conti}
G\Omega_p ({\mathbb R}^d) \times C_0^{q -{\rm var}} ({\mathbb R}^d)
\ni ({\bf x}, l)
\mapsto 
(y({\bf x}), \xi^{[1]}({\bf x},l), \xi^{[2]}({\bf x},l))
\in
C^{0, p -{\rm var}} (({\mathbb R}^e)^{\oplus 3} ).
\end{equation}
This property plays a key role in Section \ref{sec.core}.

\section{Gaussian rough path}
\label{sec.grp}

We introduce a stochastic process.
Let $(w_t)_{ 0 \le t \le T} = (w_t^1, \ldots, w_t^d)_{ 0 \le t \le T}$
be a centered, continuous, $d$-dimensional Gaussian process 
with i.i.d. components which start at $0$.
We denote its law by $\mu$ and its Cameron-Martin space 
by $\cH$.
Then, $(\cW, \cH, \mu)$ 
becomes an abstract Wiener space, where $\cW$ is the closure  
of $\cH$
with respect to the usual sup-norm
in $C_0 ({\mathbb R}^d)
:=\{x\colon [0,T] \to{\mathbb R}^d \colon \mbox{continuous and $x_0 =0$}  \}$.
When $d=1$, we write $(\cW_1, \cH_1, \mu_1)$.
Obviously, $\cW = (\cW_1)^{\oplus d}$ (the direct sum),
$\cH = (\cH_1)^{\oplus d}$ (the orthogonal sum),
$\mu = (\mu_1)^{\otimes d}$ (the product measure).

Let $R(s,t) = {\mathbb E} [w^1_s w^1_t]$ be the covariance function.
For the rest of this paper we will assume the following condition:
\begin{equation}\label{cond.191015}
\mbox{
$R (s,t)$
is of finite 2D $\rho$-variation for some $\rho \in [1, 2)$,
}
\end{equation}
(For the definition of 2D $\rho$-variation, see 
\cite[Section 15.1]{fvbk}.)  
Under \eqref{cond.191015} many facts were shown. 
Some of them are as follows.
First, $\cH$ is continuously embedded in 
$C_0^{\rho -{\rm var}} ({\mathbb R}^d)$
(see \cite[Proposition 15.7]{fvbk}). 
This implies that we can use Young integration for 
$h \in \cH$ since $\rho <2$.
Second, the natural lift ${\bf w}$ of $w$ exist
as a $G \Omega_p ({\mathbb R}^d)$-valued random variable
for $p \in (2\rho, 4)$.
All known reasonable 
rough path lift of $w$ coincides with ${\bf w}$.
Those include the limit of piecewise linear,
mollifier, and Karhunen-Lo\'eve 
approximations of $w$ 
(see \cite[Theorem 15.33, Definition 15.34]{fvbk}).
In this work we will only use Karhunen-Lo\'eve  approximations. 
We will recall it in the next paragraph.

Let $\{ h_i \}_{i=1}^{\infty}$ be an orthonormal basis of $\cH_1$
such that $\langle h_i, \bullet \rangle \in (\cW_1)^*$
and $\{ {\bf e}_j \}_{j=1}^{d}$ be the canonical 
orthonormal basis of ${\mathbb R}^d$.
Set $h_{i, j} = h_i   {\bf e}_j \in \cH$, then 
$\{ h_{i,j} \}_{1\le i <\infty, 1 \le j \le d}$ forms 
an orthonormal basis of $\cH$ and 
$\langle h_{i,j}, \bullet \rangle \in \cW^*$.
As is well-known, 
$\{ \langle h_{i,j}, \bullet \rangle \}_{1\le i <\infty, 1 \le j \le d}$ 
is an i.i.d. defined on $(\cW, \cH, \mu)$
with the law of $\langle h_{i,j}, \bullet \rangle$ being 
the standard normal distribution.

Define Gaussian processes $w^{[N]}$ and $w^{*N}$
for $w \in \cW$ and  $N \ge 1$ by
\begin{equation}\nn
w^{[N]} = \sum_{i=1}^N \sum_{j=1}^d 
\langle h_{i,j}, w \rangle h_{i,j},
\qquad
w^{*N}= w - w^{[N]}.
\end{equation}
Note that these are everywhere-defined 
random variables defined on $\cW$ 
taking values in $\cH$ and $\cW$, respectively.
Since the former is $\cH$-valued, the lift 
${\bf w}^{[N]} :=\cL (w^{[N]})$ exists for every $w$.
The
covariance of $w^{*N}$ also satisfies \eqref{cond.191015}
with the same $\rho$ (see \cite[p. 438]{fvbk}).
Therefore, the natural lift of ${\bf w}^{*N}$ of $w^{*N}$ exists, too.

Now we recall Karhunen-Lo\'eve  approximations
for Gaussian rough paths.
The following is (a special case of) 
\cite[Theorem 15.47]{fvbk}.
\begin{proposition}\label{pr.KL}
Let the notation be as above and assume \eqref{cond.191015}
and $p \in (2\rho, 4)$.
Then, we have the following:
\begin{itemize}
\item
There exists a positive constant $\eta$ independent of $N$
such that
\[
\sup_{N \ge 1} {\mathbb E} \bigl[ \exp \bigl( \eta 
\sum_{i=1}^{[p]}  \| ({\bf w}^{[N]})^i \|^{2/i}_{p/i -{\rm var}}
\bigr)
\bigr] <\infty.
\]
\item
For every $r \in [1,\infty)$ and $1\le i \le [p]$,
\[
 \| ({\bf w}^{[N]})^i - {\bf w}^i\|_{p/i -{\rm var}} \to 0
\qquad
\mbox{in $L^r(\mu)$ as $n\to \infty$.}
\]
\item
For every $r \in [1,\infty)$ and $1\le i \le [p]$,
\[
 \| ({\bf w}^{*N})^i \|_{p/i -{\rm var}} \to 0
\qquad
\mbox{in $L^r(\mu)$ as $n\to \infty$.}
\]
\end{itemize}
\end{proposition}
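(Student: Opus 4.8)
The plan is to derive all three claims from the deterministic rough path estimates combined with the Gaussian integrability results already available under \eqref{cond.191015}. The key deterministic input is that for a Gaussian process whose covariance has finite 2D $\rho$-variation, there is a Cameron--Martin complementary Young regularity, and Karhunen--Lo\`eve truncation $w \mapsto w^{[N]}$ is a conditional expectation (projection) at the level of the Wiener chaos, which behaves well both in the Cameron--Martin norm and after lifting to rough paths. More precisely, I would invoke the 2D variation estimates of \cite[Section 15]{fvbk}: the covariance $R^{[N]}$ of $w^{[N]}$ and the covariance $R^{*N}$ of $w^{*N}$ both have 2D $\rho$-variation bounded, uniformly in $N$, by that of $R$; this is the stated fact \cite[p.~438]{fvbk}. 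Given such uniform bounds, the general Gaussian rough path estimates (the Cass--Litterer--Lyons / Friz--Victoir tail estimates, \cite[Theorem 15.33 and its proof]{fvbk}) yield Gaussian integrability of the variation norms of the lifts with constants depending only on the 2D $\rho$-variation of the covariance.

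The first bullet then follows directly: apply the Gaussian tail / Fernique-type estimate for $\|({\bf w}^{[N]})^i\|_{p/i-\mathrm{var}}$ with a constant depending only on the uniform 2D $\rho$-variation bound for $R^{[N]}$, which gives the existence of $\eta>0$ independent of $N$ with $\sup_N \mathbb{E}[\exp(\eta \sum_i \|({\bf w}^{[N]})^i\|^{2/i}_{p/i-\mathrm{var}})]<\infty$. For the second bullet, I would first establish convergence in probability: $w^{[N]} \to w$ in the sense that the pair $(w^{[N]}, w)$, lifted jointly, converges in the rough path metric; this uses that $\langle h_{i,j},\bullet\rangle$ form an i.i.d. sequence so $w^{[N]}$ is a martingale-type approximation converging in $\cW$ and, crucially, the joint covariance of $(w^{[N]}, w)$ also has uniformly bounded 2D $\rho$-variation, so the standard Gaussian rough path convergence machinery (as in the proof of \cite[Theorem 15.47]{fvbk}) applies to give $\|({\bf w}^{[N]})^i - {\bf w}^i\|_{p/i-\mathrm{var}}\to 0$ in probability. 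Upgrading convergence in probability to convergence in $L^r$ for every $r$ is then a routine uniform-integrability argument: the exponential moment bound in the first bullet (together with the analogous bound for ${\bf w}$) controls arbitrarily high moments of $\|({\bf w}^{[N]})^i\|_{p/i-\mathrm{var}}$ uniformly in $N$, hence the family $\{\|({\bf w}^{[N]})^i-{\bf w}^i\|^r_{p/i-\mathrm{var}}\}_N$ is uniformly integrable for each fixed $r$. The third bullet is entirely parallel: $w^{*N}$ has covariance with uniformly bounded 2D $\rho$-variation and converges to $0$ in $\cW$; the same convergence-in-probability plus uniform-integrability scheme gives $\|({\bf w}^{*N})^i\|_{p/i-\mathrm{var}}\to 0$ in $L^r$.

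The main obstacle, and the only non-formal point, is establishing the uniform 2D $\rho$-variation control of the relevant covariances --- those of $w^{[N]}$, $w^{*N}$, and the joint processes $(w^{[N]},w)$ and $(w^{*N},w)$ --- since everything else is then a black-box application of the Gaussian rough path estimates of \cite[Section 15]{fvbk} plus uniform integrability. Here I would rely on the fact that Karhunen--Lo\`eve projection in the first Wiener chaos is an orthogonal projection on $\cH$ that is \emph{compatible with the 2D $\rho$-variation structure}: since $\{h_{i,j}\}$ is an orthonormal basis of $\cH$ with $\langle h_{i,j},\bullet\rangle\in\cW^*$, the map $w\mapsto w^{[N]}$ is conditional expectation with respect to the $\sigma$-field generated by $\langle h_{i,j},\bullet\rangle$ for $i\le N$, and conditional expectations do not increase 2D $\rho$-variation of the covariance. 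This is precisely the content of \cite[p.~438]{fvbk} and is quoted in the excerpt; I would simply cite it and, if needed, spell out the joint version by the same argument applied to the enlarged Gaussian vector. Once that is in hand, all three statements of Proposition~\ref{pr.KL} follow, completing the proof; since this proposition is verbatim \cite[Theorem 15.47]{fvbk}, it would be entirely legitimate to give only a brief indication and refer the reader there.
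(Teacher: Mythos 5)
The paper offers no proof of this proposition: it is stated verbatim as (a special case of) \cite[Theorem 15.47]{fvbk} and simply cited. Your sketch correctly reconstructs the standard argument behind that theorem --- uniform 2D $\rho$-variation control of the covariances of $w^{[N]}$, $w^{*N}$ and the joint processes via the Karhunen--Lo\`eve/conditional-expectation structure, Fernique-type integrability with constants depending only on that bound, and an upgrade from convergence in probability to $L^r$ by uniform integrability --- and, as you yourself note, the citation alone is what the paper relies on.
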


We further assume the following condition,
which is called complementary Young regularity in 
\cite[Condition 15.56]{fvbk}.
When we work under this condition, we pick 
(any) $p$ and $q$ as in the statement of this condition.
\\
\\
{\bf (CYR)}~
In additon to \eqref{cond.191015},
there exist $p \in  (2\rho, 4)$ and $q \in [1,2)$ such that (1)~
$1/p +1/q >1$ 
and (2)~ ${\cal H}$ is continuously embeded in $C_0^{q -{\rm var}} ({\mathbb R}^d)$,
the space of ${\mathbb R}^d$-valued continuous paths
of finite $q$-variation that start at $0$.

\medskip
\begin{remark}
Assumption {\bf (CYR)} holds if one of the following conditions holds:
\\
\noindent 
{\rm (i)}~$R$ is  of finite 2D $\rho$-variation for some $\rho \in [1,3/2)$.
\\
\noindent 
{\rm (ii)}~$w$ is fractional Brownian motion (fBM)
with Hurst parameter $H \in (1/4, 1/2]$.
\end{remark}

\begin{lemma}\label{lm.191017}
Under  {\bf (CYR)}, we have
$
{\bf w}^{*N}= T_{- w^{[N]}} ( {\bf w})$, 
almost surely. 
Here, 
$T\colon G \Omega_p ({\mathbb R}^d) \times 
C_0^{q -{\rm var}} ({\mathbb R}^d)
\to G \Omega_p ({\mathbb R}^d)$ stands for the Young translation.
\end{lemma}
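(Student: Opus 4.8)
The plan is to establish the identity by reducing it, via the defining property of the Young translation, to the case of a Cameron--Martin path in the first slot, and then to pass to the limit along Karhunen--Lo\`eve approximations. Recall that $w^{[N]} = \sum_{i=1}^N\sum_{j=1}^d \langle h_{i,j}, w\rangle h_{i,j}$ is $\cH$-valued, hence $w^{[N]} \in C_0^{q-{\rm var}}({\mathbb R}^d)$ under {\bf (CYR)}, and similarly $w^{*N} = w - w^{[N]}$ has covariance satisfying \eqref{cond.191015} with the same $\rho$. The statement to prove is the almost sure identity
\[
{\bf w}^{*N} = T_{-w^{[N]}}({\bf w}).
\]
Since $w^{[N]}$ is a fixed element of $C_0^{q-{\rm var}}({\mathbb R}^d)$ for each fixed $w$, the map ${\bf x} \mapsto T_{-w^{[N]}}({\bf x})$ is continuous on $G\Omega_p({\mathbb R}^d)$.

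First I would treat the ``smooth'' case. Suppose $w = h \in \cH \subset C_0^{q-{\rm var}}({\mathbb R}^d)$ is itself a Cameron--Martin path. Then $h^{[N]} = \sum_{i \le N, j} \langle h_{i,j}, h\rangle_{\cH} h_{i,j} = P_{K_N} h$ is the orthogonal projection onto $K_N := {\rm span}\{h_{i,j}: i \le N, 1 \le j \le d\}$, so both $h^{[N]}$ and $h^{*N} = h - h^{[N]}$ lie in $\cH \subset C_0^{q-{\rm var}}({\mathbb R}^d)$, and ${\bf h} = \cL(h)$, ${\bf h}^{*N} = \cL(h^{*N}) = \cL(h - h^{[N]})$. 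By the characterizing property of the Young translation, $T_{-h^{[N]}}({\bf k}) = \cL(k - h^{[N]})$ for every $k \in C_0^{q-{\rm var}}({\mathbb R}^d)$; applying this with $k = h$ and ${\bf k} = {\bf h}$ gives $T_{-h^{[N]}}({\bf h}) = \cL(h - h^{[N]}) = {\bf h}^{*N}$. Thus the identity holds pointwise on $\cH$.

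Next I would upgrade this to an almost sure identity on $\cW$ by a continuity-and-density argument, using Karhunen--Lo\`eve approximations as the bridge. Fix $M \ge 1$ and consider $w^{[M]}$; since $w^{[M]} \in \cH$ (for each $w$), the previous step applied with $h = w^{[M]}$ yields $T_{-(w^{[M]})^{[N]}}({\bf w}^{[M]}) = ({\bf w}^{[M]})^{*N}$, and since $(w^{[M]})^{[N]} = w^{[N\wedge M]}$, for $M \ge N$ this reads $T_{-w^{[N]}}({\bf w}^{[M]}) = \cL(w^{[M]} - w^{[N]})$. Now let $M \to \infty$. By Proposition \ref{pr.KL}, ${\bf w}^{[M]} \to {\bf w}$ in $G\Omega_p({\mathbb R}^d)$ in probability (indeed in every $L^r$), so by continuity of $T_{-w^{[N]}}(\cdot)$ on $G\Omega_p({\mathbb R}^d)$ we get $T_{-w^{[N]}}({\bf w}^{[M]}) \to T_{-w^{[N]}}({\bf w})$ in probability. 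On the other hand $\cL(w^{[M]} - w^{[N]})$ should converge to ${\bf w}^{*N} = \cL(w)$-translated-by-$(-w^{[N]})$ object; more precisely one identifies $\cL(w^{[M]} - w^{[N]}) = T_{-w^{[N]}}(\cL(w^{[M]}))$ and, separately, $T_{-w^{[N]}}({\bf w}^{*N} \text{'s lift approximants})$. The cleanest route: write $w^{[M]} - w^{[N]} = (w^{[M]})^{*N}$-type Karhunen--Lo\`eve approximation of $w^{*N}$ (the tail from index $N+1$ to $M$ of the expansion of $w$), so by Proposition \ref{pr.KL} applied to the Gaussian process $w^{*N}$ (whose covariance satisfies \eqref{cond.191015}) one has $\cL(w^{[M]} - w^{[N]}) \to {\bf w}^{*N}$ in probability. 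Comparing the two limits gives ${\bf w}^{*N} = T_{-w^{[N]}}({\bf w})$ almost surely, for each fixed $N$.

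The main obstacle is the bookkeeping in the limiting step: one must be careful that the finite-dimensional pieces $w^{[M]} - w^{[N]}$ are genuinely the partial sums of the Karhunen--Lo\`eve expansion of $w^{*N}$ relative to an orthonormal basis of $\cH$ adapted to the splitting $\cH = K_N \oplus K_N^\perp$, so that Proposition \ref{pr.KL} (third bullet, or rather its $L^r$-convergence companion) applies to identify $\lim_M \cL(w^{[M]} - w^{[N]})$ with the natural lift ${\bf w}^{*N}$. This requires knowing that the lift is intrinsic, i.e. that the Karhunen--Lo\`eve lift of $w^{*N}$ agrees with ${\bf w}^{*N}$ regardless of the basis --- which is exactly the content of \cite[Theorem 15.33, Definition 15.34]{fvbk} recalled in Section \ref{sec.grp}. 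Everything else (continuity of $T$, the pointwise identity on $\cH$, extraction of almost-surely convergent subsequences) is routine.
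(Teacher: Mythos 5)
Your proof is correct, and it reaches the conclusion by the same overall template as the paper's proof --- verify the translation identity exactly at the level of nice approximations, then pass to the limit using continuity of the Young translation $T$ and the fact that the natural lifts of both $w$ and $w^{*N}$ arise as limits of approximations --- but with a genuinely different approximation scheme, so the two arguments are worth contrasting. The paper discretizes in \emph{time}: it uses the dyadic piecewise linear approximations $w(k)$, for which $\cL(w^{*N}(k)) = T_{-w^{[N]}(k)}\,\cL(w(k))$ holds exactly, and it must then additionally show $w^{[N]}(k)\to w^{[N]}$ in $C_0^{(q+\delta) -{\rm var}}({\mathbb R}^d)$ (via \cite[Theorem 5.23]{fvbk} applied to the basis elements $h_{i,j}$) so as to invoke the joint continuity of $T$ in both arguments. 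You instead truncate the Karhunen--Lo\`eve expansion: for $M\ge N$ the identity $T_{-w^{[N]}}(\cL(w^{[M]})) = \cL(w^{[M]}-w^{[N]})$ is exact, the second argument of $T$ is the \emph{fixed} path $w^{[N]}$, so only continuity of ${\bf x}\mapsto T_{-w^{[N]}}({\bf x})$ is needed, and the limit of the right-hand side is identified with ${\bf w}^{*N}$ by applying Proposition \ref{pr.KL} to the process $w^{*N}$ itself. The one point you should spell out is the applicability of the Karhunen--Lo\`eve convergence theorem to $w^{*N}$: its components remain i.i.d., its Cameron--Martin space is the orthogonal complement in $\cH$ of the span of $\{h_{i,j}\}_{i\le N,\,1\le j\le d}$ with the inner product inherited from $\cH$ (so that $\{h_{i,j}\}_{i>N}$ is an orthonormal basis of it and $w^{[M]}-w^{[N]}$ really is the $M$-th partial sum of the Karhunen--Lo\`eve expansion of $w^{*N}$), and its covariance satisfies \eqref{cond.191015} with the same $\rho$, as recorded in Section \ref{sec.grp}. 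Together with the routine passage to a.s.\ convergent subsequences before applying the (random) continuous map $T_{-w^{[N]}}$, the two limits agree a.s.\ and the lemma follows. In short, your route trades the $q$-variation convergence of piecewise linear approximations of Cameron--Martin paths for the basis-independence of the natural lift of $w^{*N}$; both ingredients are available from \cite{fvbk}, and your version has the mild advantage of never having to approximate the translating path.
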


\begin{proof}
For $w \in \cW$ and $k \ge 1$, we denote by 
$w (k)$ the dyadic piecewise linear approximation 
associated with $\{ l2^{-k} T\colon 0 \le l \le 2^k\}$.
By the piecewise linear approximation theorem
for Gaussian rough path \cite[Theorem 15.42]{fvbk},
we have $\cL (w(k)) \to {\bf w}$ a.s. 
and $\cL (w^{*N}(k)) = T_{-w^{[N]} (k) } \cL (w(k))
\to {\bf w}^{*N}$ a.s. as $k \to \infty$, 
taking a subsequence if necessary.
Take $\delta >0$ so small that $1/p +1/(q+\delta) >1$.
By \cite[Theorem 5.23]{fvbk}, 
we have $h_{i,j} (k)\to h_{i,j}$ in $C_0^{q+\delta -{\rm var}} ({\mathbb R}^d)$
as $k \to \infty$.
Hence, for all $w\in \cW$,
 $w^{[N]} (k)\to w^{[N]}$ in $C_0^{q+\delta -{\rm var}} ({\mathbb R}^d)$
as $k \to \infty$.
Using the continuity of $T$, we finish the proof.
\end{proof}

By Proposition \ref{pr.KL}, we can find
 a subsequense $\{N_n \}_{n=1}^\infty$
such that 
${\bf w}^{[N_n]} \to {\bf w}$ a.s. 
and ${\bf w}^{*N_n} \to {\bf 0}$ a.s. 
in 
$G \Omega_p ({\mathbb R}^d)$ as $n \to \infty$.
Here, ${\bf 0}$ stands for the zero rough path. 
(In some literature it is denoted by ${\bf 1}$.)
We take such $\{N_n \}_{n=1}^\infty$ and
set 
\[
\tilde\cA :=\{ w \in \cW \colon 
\mbox{$\{ {\bf w}^{[N_n]}\}_{n=1}^\infty$  converges
in $G \Omega_p ({\mathbb R}^d)$}\}.
\]
As a version of ${\bf w}$, we choose the following 
everywhere-defined Borel-measurable map 
${\bf L}\colon \cW \to G \Omega_p ({\mathbb R}^d)$:
\[
{\bf L} (w)=\left \{
\begin{array}{ll}
\lim_{n\to \infty} {\bf w}^{[N_n]}
& \mbox{if $w \in \tilde\cA$
} \\
{\bf 0}  &\mbox{if otherwise.}
\end{array}
\right.
\]
Since $h^{[N]} \to h$ in $\cH$ as $N \to \infty$
and we have continuous injections  
$\cH \hookrightarrow C_0^{q -{\rm var}} ({\mathbb R}^d) 
\hookrightarrow G \Omega_p ({\mathbb R}^d)$, 
we can easily see that $h\in \tilde\cA$ and 
$\cL (h) ={\bf L} (h)$
for every $h \in \cH$.

Note that if  $\{ {\bf w}^{[N_n]} = \cL (w^{[N_n]})\}$  is convergent
then so is $\{ \cL ((w+h)^{[N_n]})\}$ for every $h \in \cH$ since
\[
\cL ((w+h)^{[N_n]}) = \cL (w^{[N_n]} +h^{[N_n]})
= T_{h^{[N_n]}}  \cL(w^{[N_n]}) \to  T_h  {\bf L} (w)
\quad
\mbox{as $n \to \infty$.}
\]
This implies that 
$\tilde\cA$ is invariant under $\tau_h$
for every $h \in \cH$,
where $\tau_h (w) =w+h$,  
and that ${\bf L} (w+h)=T_h  {\bf L} (w)$
for every $w \in \tilde\cA$ and $h \in \cH$.

By Lemma \ref{lm.191017}, 
we can choose
$T_{- w^{[N]}}  {\bf L} (w)$ as an
everywhere-defined Borel-measurable version of ${\bf w}^{*N}$
and will fix this version from now on.
Then,  we can find a subsequence of $\{N_n\}$,
which will be denoted by the same symbol again, such that
the following subset 
\[
\cA :=\{ w \in \tilde\cA \colon 
\mbox{$\lim_{n \to\infty} {\bf w}^{*N_n} ={\mathbf 0}$
in $G \Omega_p ({\mathbb R}^d)$}\}.
\]
is of full measure with respect to $\mu$.
Then, $\cA$ is also invariant under $\tau_h$ for every $h \in \cH$. 
Indeed, if $w \in \cA$, then 
\[
T_{- (w+h)^{[N_n]}}  {\bf L} (w+h) =T_{h-h^{[N_n]}  } T_{- w^{[N_n]}}  {\bf L} (w)
\to T_0 {\bf 0} ={\bf 0}
\qquad
\mbox{as $n \to\infty$},
\]
which implies $w+h \in \cA$.
In particular, $\cH \subset \cA$ since $0 \in \cA$.

Choose a subsequence $\{ N_n\}$ as above and 
fix it in what follows.
Let $K_n \subset \cH$ be the finite-dimensional 
subspace spanned by $\{ h_{i,j} \}_{1\le i \le N_n, 1 \le j \le d}$
and denote by $P_n \colon \cH \to K_n$
the orthogonal projection onto this subspace. 
As is well-known, $P_n$ naturally extends to 
the bounded projection $\bar{P}_n \colon \cW \to K_n$
given explicitly by $\bar{P}_n (w) = w^{[N_n]}$.
We set 
$P_n^\perp (h) = h- P_n (h)$ for $h \in \cH$
and
$\bar{P}_n^\perp (w) = w- \bar{P}_n (w)$ for $w \in \cW$.
If we use this notation, 
${\bf w}^{*N_n} =  {\bf L} (w -w^{[N_n]}) =  {\bf L} ( P_n^\perp (w) )$ 
for all $w \in \cA$.

\begin{remark}
Consider the case $\cW =C_0 ({\mathbb R}^d)$
(e.g. the case that $w$ is fBM with $1/4 <H \le 1/2$).
Then, the situation  described above can be summarized by the 
following commutative diagram:
\[
  \xymatrix{
  &  & G \Omega_p ({\mathbb R}^d)   \ar[rd]^{\Phi} & \\
 \cH  \ar@{^{(}-_>}[r]  
   & C_0^{q -{\rm var}} ({\mathbb R}^d) \ar[ru]^{\cL}\ar@{^{(}-_>}[r]
       & {\mathcal W} \ar[u]_{{\bf L}}   & C^{0,p -{\rm var}} ({\mathbb R}^e)
  }
\]
Here, all the maps except ${\bf L}$ are continuous.
Recall also that $\Psi = \Phi \circ \cL$.
(If $\cW \neq C_0 ({\mathbb R}^d)$,  
we just need to replace 
 $C_0^{q -{\rm var}} ({\mathbb R}^d)$ in this diagram by 
the closure of $\cH \cap C_0^{q -{\rm var}} ({\mathbb R}^d)$ 
with respect to the $q$-variation norm.)
\end{remark}


\section{Twice $\cK$-differentiability of Lyons-It\^o map}
\label{sec.core}

For $\cK=\{K_n\}_{n=1}^\infty$ as in the previous section,
the rough path lift map is $\cK$-regular
and so is the solution of an RDE driven a Gaussian rough path.
Note that $\Phi \circ {\bf L}$ equals a.s. to the solution of 
RDE \eqref{rde.def} with ${\bf x}$ being replaced by ${\bf w}$.

\begin{proposition}\label{pr.Kreg}
Let the notation be as above and
assume {\bf (CYR)}.
Then, we have the following:
\begin{itemize}
\item[{\rm (1)}]
The measurable map ${\bf L}\colon \cW \to G \Omega_p ({\mathbb R}^d)$ is uniformly $\cK$-regular.
\item[{\rm (2)}]
If, in addition, 
$V_i$ is of $C_b^{[p] +1}$ for all $0 \le i \le d$, then 
$\Phi \circ 
{\bf L}\colon \cW \to C^{0, p -{\rm var}} ({\mathbb R}^e)$ is uniformly
$\cK$-regular.
\end{itemize}
\end{proposition}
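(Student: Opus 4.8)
The plan is to verify the three conditions in the definition of uniform $\cK$-regularity directly from the approximation structure set up in Section~\ref{sec.grp}, using Proposition~\ref{pr.KL} and the local Lipschitz continuity of $\cL$ and $\Phi$. For part (1), the natural candidates are $\tilde{\bf L} = \cL$ on $\cH$ (recall $\cH \hookrightarrow C_0^{q-\mathrm{var}}(\R^d) \hookrightarrow G\Omega_p(\R^d)$), and $\mathbf{L}_n(w,k) = \cL(w^{[N_n]} + k) = T_k \mathbf{L}(w^{[N_n]})$ for $k \in K_n$, which is continuous in $k$ by continuity of the Young translation $T$ and satisfies $\mathbf{L} \circ \tau_k = \mathbf{L}_n(\cdot, k)$ a.s.\ (this is essentially the computation already displayed before Lemma~\ref{lm.191017}). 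Then $\mathbf{L}_n^\perp(w,k) = \cL(-\bar P_n(w) + k) = \cL(P_n^\perp(w)^{[0]} \cdots)$ — more precisely $\mathbf{L}_n^\perp(w,k) = T_{k} \mathbf{L}(w - w^{[N_n]}) = T_k \mathbf{w}^{*N_n}$ by Lemma~\ref{lm.191017}. So the first slot in \eqref{rhoe2}, $\rho_E(\tilde{\mathbf L}(\bar P_n(w) + k), \mathbf{L}_{m\vee n}(w,k))$, with $m \le n$ becomes $\rho_E(\cL(w^{[N_n]}+k), \cL(w^{[N_n]}+k)) = 0$; for $m > n$ it is controlled by the distance between $\cL$ evaluated at two nearby paths plus the $G\Omega_p$-distance between $\mathbf{w}^{[N_n]}$ and $\mathbf{w}^{[N_m]}$, which goes to $0$ in probability by Proposition~\ref{pr.KL}. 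The second slot, $\rho_E(\tilde{\mathbf L}(h+k), \mathbf{L}^\perp_{m\vee n}(w, P_n(h)+k)) = \rho_E(\cL(h+k), T_{P_n(h)+k}\mathbf{w}^{*N_{m\vee n}})$, is handled by writing $\cL(h+k) = T_{h+k}\mathbf{0}$ and $T_{P_n(h)+k}\mathbf{w}^{*N_{m\vee n}} = T_{k}T_{P_n(h)}\mathbf{w}^{*N_{m\vee n}}$, then using $P_n(h) \to h$ in $\cH$ (hence in $C_0^{q-\mathrm{var}}$), $\mathbf{w}^{*N_n} \to \mathbf{0}$ (Proposition~\ref{pr.KL}), and the \emph{joint} continuity of $T$ on bounded sets; the supremum over $\|k\|_\cH \le r$, $k \in K_m$, is absorbed because $T$ is \emph{uniformly} continuous on bounded subsets of $G\Omega_p(\R^d) \times C_0^{q-\mathrm{var}}(\R^d)$ (local Lipschitz continuity of $T$), and $\{k : \|k\|_\cH \le r\}$ maps into a bounded set of $C_0^{q-\mathrm{var}}(\R^d)$.

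For part (2), I would compose with $\Phi$. Set $\widetilde{\Phi \circ \mathbf L} = \Phi \circ \cL = \Psi$ on $\cH$, and $(\Phi\circ\mathbf L)_n(w,k) = \Phi(\mathbf{L}_n(w,k)) = \Phi(\cL(w^{[N_n]}+k))$, with the perpendicular version $(\Phi\circ\mathbf L)_n^\perp(w,k) = \Phi(T_k \mathbf{w}^{*N_n})$. The key leverage is that $\Phi \colon G\Omega_p(\R^d) \to C_a^{p-\mathrm{var}}(\R^e) \cap C^{0,p-\mathrm{var}}(\R^e)$ is \emph{locally} Lipschitz (stated in Section~\ref{sec.det_RP}); combined with part (1) this is almost automatic, but one must handle the non-compactness of $\cW$ carefully: one fixes $R > 0$, restricts attention to the event where all the rough paths involved ($\mathbf{w}^{[N_n]}$, $\mathbf{w}^{*N_n}$, $T_{P_n(h)+k}\mathbf{w}^{*N_n}$, etc.) lie in a ball of radius $R$ in $G\Omega_p(\R^d)$ of overwhelming probability — this uses the exponential integrability bound in Proposition~\ref{pr.KL}, first bullet, plus a Chebyshev argument and the fact that $\sup_{\|k\|_\cH \le r} \|T_k \mathbf{x}\|$ is bounded in terms of $\|\mathbf{x}\|$ and $r$ (local Lipschitz/boundedness of $T$). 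On this good event $\Phi$ is Lipschitz with a fixed constant $L_R$, so $\rho_E(\Phi(\mathbf{x}), \Phi(\mathbf{y})) \le L_R \rho_{G\Omega_p}(\mathbf{x},\mathbf{y})$ and the conclusion for $\Phi\circ\mathbf L$ follows from the conclusion for $\mathbf L$; the measure of the bad event is made $< \epsilon$ first and is uniform in $n$.

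I expect the main obstacle to be bookkeeping rather than any deep point: making the reduction to a high-probability ball $\{\|\cdot\|_{G\Omega_p} \le R\}$ truly \emph{uniform in $n$} for \emph{all} the rough-path quantities appearing simultaneously in \eqref{rhoe3} — that is, $\bar P_n(w) + \bullet$ translated, the perpendicular part $\mathbf{w}^{*N_n}$, and their Young translates by $k$ ranging over the ball $B_{K_m}(0,r)$ — and then transferring the local Lipschitz estimates for $\cL$, $T$, and $\Phi$ to that event with constants independent of $n$. The exponential moment bound of Proposition~\ref{pr.KL} is exactly what makes this possible for the $\mathbf{w}^{[N_n]}$ terms; for the $\mathbf{w}^{*N_n}$ terms one uses the third bullet of Proposition~\ref{pr.KL} (convergence to $\mathbf 0$ in every $L^r$), which in particular gives tightness, hence a uniform-in-$n$ ball of arbitrarily large probability. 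Everything else is an $\epsilon/3$-type argument splitting \eqref{rhoe3} into: (i) the good-event reduction, (ii) the difference $P_n(h) \to h$ and $\mathbf{w}^{*N_n} \to \mathbf 0$ fed through the uniform continuity of $T$ on the ball, and (iii) the Lipschitz transfer through $\Phi$. One should also remark that since the relevant target space $C^{0,p-\mathrm{var}}(\R^e)$ is a separable Banach space and $\Psi = \Phi\circ\cL$ is Fréchet-$C^\infty$ when the $V_i$ are smooth enough, the differentiability hypotheses in the definition of $l$-times $\cK$-regular differentiability are met; but for this Proposition, which only asserts uniform $\cK$-regularity, the $C_b^l$-norms in \eqref{rhoe3} are not needed and one works with the cruder condition \eqref{rhoe2}.
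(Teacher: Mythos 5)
Your overall route is the paper's: take $\tilde F=\cL\restriction_\cH$, use the Karhunen--Lo\'eve exhaustion, and feed the convergences ${\bf w}^{[N_n]}\to{\bf L}(w)$, ${\bf w}^{*N_n}\to{\bf 0}$ and $P_n(h)\to h$ through the local Lipschitz continuity of the Young translation $T$ (for part (1)) and of $\Phi$ (for part (2)). But there is a genuine defect in your setup of part (1): you take ${\bf L}_n(w,k)=\cL(w^{[N_n]}+k)=T_k{\bf w}^{[N_n]}$. This does \emph{not} satisfy the defining requirement $F\circ\tau_k=F_n(\cdot,k)$ a.s., since ${\bf L}(w+k)=T_k{\bf L}(w)\neq T_k{\bf w}^{[N_n]}$ for $\mu$-a.e.\ $w$ (the lift of the finite-rank projection is not a.s.\ equal to the full lift). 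Moreover, with your definition one gets ${\bf L}_n^\perp(w,k)={\bf L}_n(w,-\bar P_n(w)+k)=\cL(w^{[N_n]}-w^{[N_n]}+k)=\cL(k)$, \emph{not} $T_k{\bf w}^{*N_n}$; your claimed identity ${\bf L}_n^\perp(w,k)=T_k{\bf w}^{*N_n}$ is the one that follows from the correct choice $F_n(w,k)=T_k{\bf L}(w)$ (which the paper makes, on the full-measure, $\tau_h$-invariant set $\cA$, and $={\bf 0}$ off $\cA$). So your first slot becomes trivially zero and your second slot, as literally defined, loses all its content; your subsequent computations tacitly switch to the correct $F_n$. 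Once $F_n(w,k):=T_k{\bf L}(w)$ is adopted, the rest of your argument for (1) is exactly the paper's: $\sup_{\|k\|_\cH\le r}\rho(T_k{\bf w}^{[N_n]},T_k{\bf L}(w))\le C_{r,w}\,\rho({\bf w}^{[N_n]},{\bf L}(w))\to0$ and similarly for the perpendicular slot, for every $w\in\cA$.

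For part (2), your exponential-moment/Chebyshev ``good event'' reduction is workable but unnecessary, and this is worth noting because it is where you anticipate the main difficulty. Since \eqref{rhoe2} only asks for convergence in $\mu$-probability, it suffices to prove, for each \emph{fixed} $w\in\cA$, that the suprema over $\|k\|_\cH\le r$ tend to zero; almost sure convergence then gives convergence in probability for free. For fixed $w\in\cA$ the sequence $\{{\bf w}^{[N_n]}\}_n$ converges, hence is bounded in $G\Omega_p(\R^d)$, so all the rough paths $T_k{\bf w}^{[N_n]}$, $T_k{\bf L}(w)$, $T_{k+P_n(h)}{\bf w}^{*N_n}$ with $\|k\|_\cH\le r$ lie in a single ($w$-dependent) bounded set, and the local Lipschitz constant $C_{r,w}$ of $\Phi$ on that set may depend on $w$ without any harm. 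No uniform-in-$n$ tightness or union bound is needed; the first bullet of Proposition \ref{pr.KL} plays no role in this proposition.
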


\begin{proof}
First we show (1).
We set $E^\prime = G \Omega_p ({\mathbb R}^d)$, $G={\mathbf L}$ and
$\tilde{G} = \cL\restriction_\cH\colon \cH \to E^\prime$. 
Set also 
$G_n\colon\cW\times K_n\longmapsto E^\prime$ by 
$G_n (w,k) = T_k {\mathbf L}(w)$ if $w \in \cA$
and $G_n (w,k) = {\mathbf 0}$ if $w \notin \cA$.
Then, for all 
$w \in \cA$ and $k \in \cH$, we have 
\begin{align*}
G \circ \tau_k (w) &= {\mathbf L}(w+k) = T_k  {\mathbf L}(w) = G_n (w,k),
\\
G_n^\perp (w,k) &= G_n(w,-\bar P_n (w)+k)
={\mathbf L}(w-\bar P_n (w)+k )
={\mathbf L}(\bar P_n^\perp (w)+k ) = T_k  {\mathbf w}^{*N_n}.
\end{align*}
Thanks to these explicit expressions, we may and will view
 $G_n$ and $G_n^\perp$ as
  maps from  $\cW\times \cH$ to $E^\prime$.

Let us check \eqref{rhoe2}. Take $w \in \cA$.
Note that $\tilde{G}(\bar{P}_n(w)+k) = \cL (\bar{P}_n(w)+k )
= T_k {\mathbf w}^{[N_n]}$ and 
$G_{m\vee n}(w,k) = T_k {\mathbf L}(w)$.
Since ${\mathbf w}^{[N_n]} \to  {\mathbf L}(w)$ as $n \to \infty$,
$\{ {\mathbf w}^{[N_n]} \}_n$ is bounded in $E^\prime$.
Since $T\colon E^\prime \times \cH \to E^\prime$ is locally Lipschitz continuous,
we see that 
\begin{align*}
\sup_{\|k\|_{\cH}\leq r}
\rho_{E^\prime} (\tilde{G}(\bar{P}_n(w)+k),G_{m\vee n}(w,k))
&=
\sup_{\|k\|_{\cH}\leq r}
\rho_{E^\prime} (T_k {\mathbf w}^{[N_n]},   T_k {\mathbf L}(w))
\\
&\le C_{r, w} \rho_{E^\prime} ( {\mathbf w}^{[N_n]}, {\mathbf L}(w))
\to 0
\quad
\mbox{as $n \to \infty$.}
\end{align*}
Here, 
$C_{r, w}$ is a positive constant which depends only on $r>0$
and $w \in \cA$ (and may vary from line to line).
In a similar way, since ${\mathbf w}^{*N_n}\to {\bf 0}$ as $n\to \infty$,
\begin{align*}
\sup_{\|k\|_{\cH}\leq r}
\rho_{E^\prime} (\tilde{G}(h+k),G_n^{\perp}(w, P_n (h)+k))
&=
\sup_{\|k\|_{\cH}\leq r}
\rho_{E^\prime} ( T_{k+h} {\mathbf 0},  T_{k+ P_n (h)}  {\mathbf w}^{*N_n})
\\
&\le 
C_{r, w, h} \{ \rho_{E^\prime} ( {\mathbf w}^{*N_n}, {\mathbf 0}) 
+ \| P_n (h) -h \|_{\cH} \}
\to 0
\end{align*}
as $n \to \infty$.
Here, $C_{r, w, h}$ is a positive constant 
which depends only on $r>0$, $h\in \cH$
and $w \in \cA$. 
Thus, we have shown (1).

Next we show (2).
We set $E = C^{0, p -{\rm var}} ({\mathbb R}^e)$, 
$F=\Phi \circ {\mathbf L}$ and
$\tilde{F} =\Phi \circ \cL\restriction_\cH\colon \cH \to E$. 
Note that $\tilde{F} = \Psi\restriction_\cH$.
Set also 
$F_n\colon\cW\times K_n\longmapsto E$ by 
$F_n (w,k) = \Phi (G_n (w,k))$.
Take any $w \in \cA$ and $k \in \cH$. 
It is clear that $F \circ \tau_k (w)= F_n (w,k)$.
We also have $F_n^\perp (w,k) =  \Phi(T_k  {\mathbf w}^{*N_n})$.
Again, we may and will view 
 $F_n$ and $F_n^\perp$ as maps from  $\cW\times \cH$ to $E$.

Since Lyons-It\^o map $\Phi$ is locally Lipschitz continuous
and both
$\tilde{G}(\bar{P}_n(w)+k)$ and $G_{m\vee n}(w,k)$ 
stay bounded when $n \ge 1$ and $k\in \cH$ 
(with $\|k\|_{\cH} \le r$) vary,  we have 
\[
\sup_{ \|k\|_{\cH}\leq r}
\rho_{E} (\tilde{F}(\bar{P}_n(w)+k),F_{m\vee n}(w,k))
\le 
C_{r,w}
\sup_{ \|k\|_{\cH}\leq r}
\rho_{E} (\tilde{G}(\bar{P}_n(w)+k),G_{m\vee n}(w,k)).
\]
As we have seen, the right hand tends to zero as $n\to\infty$.
In same way, we can show 
\[
\sup_{\|k\|_{\cH}\leq r}
\rho_{E} (\tilde{F}(h+k),F_n^{\perp}(w, P_n (h)+k))
\to 0
\qquad
\mbox{as $n\to\infty$.}
\]
These imply \eqref{rhoe2} for $F$.
Thus, we have shown (2).
\end{proof}

The following is the key result of this paper.
Note that twice $\cK$-regular differentiability is 
shown at the level of the path space 
unlike in \cite[Theorem 3.41]{aks}.
Of course, it immediately implies that
$y_t=y_t ({\bf w})$ is twice $\cK$-regularly differentiable for every $t \in [0,T]$,
where $y= (y_t({\bf w}))_{0 \le t \le T}$ the unique solution of 
RDE \eqref{rde.def} with ${\bf x}$ being replaced by 
the Gaussian rough path ${\bf w}$.
\begin{proposition}\label{pr.Kdiff}
Assume {\bf (CYR)}.  
If $V_i$ is of $C_b^{[p] +3}$ for all $0 \le i \le d$, then 
$\Phi \circ 
{\bf L}\colon \cW \to C^{0, p -{\rm var}} ({\mathbb R}^e)$
is twice $\cK$-regularly differentiable.
\end{proposition}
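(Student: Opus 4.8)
The plan is to upgrade the uniform $\cK$-regularity of $\Phi\circ{\bf L}$ from Proposition \ref{pr.Kreg}(2) to twice $\cK$-regular differentiability by adding control of the first two Fréchet derivatives. First I would take the subsequence $\{N_n\}$, the full-measure invariant set $\cA$, and the maps $G_n, G_n^\perp, F_n=\Phi\circ G_n, F_n^\perp$ exactly as set up in the proof of Proposition \ref{pr.Kreg}. The verification splits into three tasks matching the three requirements in the definition of $l$-times $\cK$-regular differentiability (with $l=2$): (a) $F_n(w,\cdot)$ is twice continuously Fréchet differentiable on $K_n$ for each $w\in\cW$; (b) $\tilde F=\Psi\restriction_\cH$ is twice continuously Fréchet differentiable on $\cH$; and (c) the convergence \eqref{rhoe3} holds, i.e. the $C_b^2(B_{K_m}(0,r),E)$-norms of the two path-space differences go to $0$ in $\mu$-probability.

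For (a) and (b), I would invoke the deterministic results recalled at the end of Section \ref{sec.det_RP}. Since $V_i\in C_b^{[p]+3}$, the system of RDEs \eqref{rde.def}, \eqref{heu_3.eq}, \eqref{heu_4.eq} has a unique global solution and the map \eqref{eq.der.Lip.conti} is locally Lipschitz; moreover $\Psi$ (hence $\tilde F$) is Fréchet-$C^2$ with $D_l\Psi(h)=\xi^{[1]}(h,l)$, $D^2_{l,l}\Psi(h)=\xi^{[2]}(h,l)$, which gives (b). For (a), note $F_n(w,\cdot)=\Phi(T_{\bullet}{\bf L}(w))$ on $K_n$; writing $k\in K_n$ as a driver and using $T_k{\bf L}(w)$ as the rough path input, the derivatives of $F_n(w,\cdot)$ in the direction $l\in K_n$ are given by $\xi^{[1]}(T_k{\bf L}(w),l)$ and $\xi^{[2]}(T_k{\bf L}(w),l)$ — i.e. the path-space derivatives are exactly the solutions of the derivative RDEs driven by the translated Gaussian rough path. (Here one uses that Young translation by $k\in\cH$ is the correct geometric-rough-path shift, and that differentiating $\Phi$ along the $\cH$-directions reduces to the RDE system above; this is the content of \cite{ina2} / \cite[Section 10.7]{fvbk}.) Continuity in $(k,l)$ follows from local Lipschitzness of \eqref{eq.der.Lip.conti} together with local Lipschitzness of Young translation.

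For (c), the strategy is the same as in Proposition \ref{pr.Kreg}(2) but carried out one derivative-level up. On the event $\{w\in\cA\}$ (which has full measure), for the first difference I compare, for $k\in B_{K_m}(0,r)$, the quantities $\tilde F(\bar P_n(w)+k)$ and $F_{m\vee n}(w,k)$ together with their first and second derivatives in $k$: these are, respectively, $\bigl(y,\xi^{[1]},\xi^{[2]}\bigr)$ evaluated with rough path input ${\bf w}^{[N_n]}$ (after translation by $k$) versus with input ${\bf L}(w)$ (after translation by $k$). Since ${\bf w}^{[N_n]}\to{\bf L}(w)$ in $G\Omega_p({\mathbb R}^d)$ and the translates $T_k{\bf w}^{[N_n]}$, $T_k{\bf L}(w)$ stay in a bounded set as $n\ge1$ and $\|k\|_\cH\le r$ vary, the local Lipschitz continuity of the augmented solution map \eqref{eq.der.Lip.conti} (jointly in the rough path and the perturbation driver, in the $q$-variation topology) gives
\[
\sup_{k\in B_{K_m}(0,r)}\bigl\|\tilde F(\bar P_n(w)+k)-F_{m\vee n}(w,k)\bigr\|_{C_b^2(B_{K_m}(0,r),E)}
\le C_{r,w}\,\rho_{E'}\bigl({\bf w}^{[N_n]},{\bf L}(w)\bigr)\to 0,
\]
and likewise, using ${\bf w}^{*N_n}\to{\bf 0}$ and $\|P_n(h)-h\|_\cH\to0$, the second difference involving $F_n^\perp(w,P_n(h)+\bullet)=\Phi(T_{\bullet}{\bf w}^{*N_n})$ and its derivatives $\xi^{[1]},\xi^{[2]}$ also tends to $0$ in $C_b^2$-norm on $B_{K_m}(0,r)$. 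This yields \eqref{rhoe3}.

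The main obstacle is the bookkeeping in step (c): one must argue that the Fréchet derivatives of $F_n$ and of $\tilde F$ in the $K_m$-directions are genuinely given by the solutions $\xi^{[1]},\xi^{[2]}$ of the derivative RDEs driven by the translated rough paths, and that a single local Lipschitz estimate for the map \eqref{eq.der.Lip.conti} controls all three components uniformly over the bounded parameter set. The delicate point is that translation is by $\cH$-elements while the topology in which Lipschitz continuity is available for the perturbation is $q$-variation; this is fine because $\cH\hookrightarrow C_0^{q-{\rm var}}({\mathbb R}^d)$ continuously under {\bf (CYR)}, so a bounded $\cH$-ball maps into a bounded $q$-variation ball, and Young translation is continuous there. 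Once these identifications are in place, the probabilistic part is immediate: the $\mu$-null exceptional set is $\cW\setminus\cA$, and convergence of the $C_b^2$-norms holds for every fixed $w\in\cA$, so \eqref{rhoe3} follows without even passing to probability. I would also remark that, having established differentiability at the path-space level, composing with the evaluation map $C^{0,p-{\rm var}}({\mathbb R}^e)\to{\mathbb R}^e$, $y\mapsto y_t$, which is linear and bounded, immediately gives twice $\cK$-regular differentiability of $y_t({\bf w})$ for each $t$, with $\cK$-regularization $h\mapsto\Psi(h)_t$.
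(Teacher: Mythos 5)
Your proposal is correct and follows essentially the same route as the paper: identify the first and second Fr\'echet (equivalently G\^ateaux) derivatives of $\tilde F$, $F_n$ and $F_n^\perp$ with the solutions $\xi^{[1]},\xi^{[2]}$ of the derivative RDEs driven by $T_k{\bf w}^{[N_n]}$, $T_k{\bf L}(w)$ and $T_k{\bf w}^{*N_n}$, and then deduce \eqref{rhoe3} for every fixed $w\in\cA$ from the local Lipschitz continuity of the augmented map \eqref{eq.der.Lip.conti} together with ${\bf w}^{[N_n]}\to{\bf L}(w)$, ${\bf w}^{*N_n}\to{\bf 0}$ and $\|P_n(h)-h\|_\cH\to 0$. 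The only details the paper spells out that you gloss over are the use of polarization to recover the mixed second derivatives $D^2_{l,\hat l}$ from the diagonal equation \eqref{heu_4.eq}, and that the differentiability of $F_{m\vee n}(w,\cdot)$ with the claimed derivative is obtained as the uniform limit of the derivatives of $\tilde F(\bar P_n(w)+\cdot)$ rather than asserted directly.
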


\begin{proof}
We use the same notation as in the proof of 
Proposition \ref{pr.Kreg}, (2).
An unimportant positive constant which depends only 
on the parameter $\star$ is denoted by $C_\star$,
which may vary from line to line.
As is well-known, Fr\'echet-$C^n$ and G\^ateaux-$C^n$
are equivalent (see \cite[Theorem (2.1.27)]{ber} for example). 
Hence, we only consider G\^ateaux derivatives.

We will prove \eqref{rhoe3} for $l=2$ by 
showing that 
\begin{align}
\lefteqn{
\|\tilde{F}(\bar{P}_n(w)+\bullet)-F_{m\vee n}(w,\bullet))\|_{C_{b}^2(B_{\cH}(0,r), E)} 
}\nn\\
&\qquad
\vee \|\tilde{F}(h+\bullet)-F^{\perp}_{n}(w, P_n(h)+\bullet)\|_{C_{b}^2(B_{\cH}(0,r), E)} 
\to 0 
\quad
\mbox{as $n\to\infty$}
\label{conv.200324}
 \end{align} 
 for every $w\in \cA$, $r>0$ and $h \in \cH$.
Convergence of the zeroth order in \eqref{conv.200324}
was already shown 
in the proof of Proposition \ref{pr.Kreg}, (2).

Now we calculate the first order derivative.
For the rest of the proof,  let $r, r'>0$, $w \in {\mathcal A}$, 
$k, l, h \in {\mathcal H}$.
Since $\tilde{F} = \Psi$, we have
\begin{align*}
D_l \tilde{F}(\bar{P}_n(w)+\bullet) \vert_{\bullet =k}
&=
\xi^{[1]}(\bar{P}_n(w)+ k, l)
=
\xi^{[1]}( T_k {\bf w}^{[N_n]}, l)
\end{align*}
Due to the local Lipschitz continuity of $\xi^{[1]}$ we mentioned in 
\eqref{eq.der.Lip.conti}, we have that, 
if $\|k\|_{\cH} \le r$ and $\|l\|_{\cH}\le r'$, then
\begin{align}
\bigl\|
D_l \tilde{F}(\bar{P}_n(w)+\bullet) \vert_{\bullet =k} 
- \xi^{[1]}( T_k {\bf w}, l) \bigr\|_E 
&=
\| \xi^{[1]}( T_k {\bf w}^{[N_n]}, l)
- \xi^{[1]}(T_k{\mathbf L}(w), l) \|_E
\nn\\
&\le 
C_{r, r', w} \rho_{E^\prime} ( {\mathbf w}^{[N_n]}, {\mathbf L}(w))
\to 0
\quad
\mbox{as $n \to \infty$.}
\nn
\end{align}
In particular,
this uniform convergence in $(k, l)$ implies that
\[
D_l  F_{m\vee n}(w,\bullet))\vert_{\bullet =k} 
= D_l   \Phi (T_{\bullet} {\mathbf L}(w))
\vert_{\bullet =k} 
=\xi^{[1]}( T_k {\mathbf L}(w), l) 
\]
and $k \mapsto D  F_{m\vee n}(w,\bullet))\vert_{\bullet =k} 
$ is continuous and
\begin{align}
\lefteqn{
\sup_{\|k\|_{\cH} \le r}
\|D \tilde{F}(\bar{P}_n(w)+\bullet)\vert_{\bullet =k} 
- D F_{m\vee n}(w,\bullet))\vert_{\bullet =k}
\|_{\cH \to E}
}
\nn\\
&\le 
\sup_{\|k\|_{\cH} \le r}\sup_{\|l\|_{\cH} \le 1}
 \| \xi^{[1]}( T_k {\bf w}^{[N_n]}, l)
   - \xi^{[1]}( T_k {\mathbf L}(w), l) \|_E
 \to 0 
 \quad
\mbox{as $n \to \infty$,}
\label{eq.200323-2}
\end{align}
where $\| \,\cdot\, \|_{\cH \to E}$ stands for the 
operator norm for bounded operators from $\cH$ to $E$.

Since $F_n^\perp (w,k) =\Phi (T_k  {\mathbf w}^{*N_n} )
= \Phi (T_{k -w^{[N_k]}}   {\mathbf L}(w) )$,
we can show in the same way as above that 
$D_l F_n^\perp (w,\bullet)\vert_{\bullet =k}
= \xi^{[1]}( T_{k -w^{[N_n]}}  {\mathbf L}(w), l) =
\xi^{[1]}( T_{k}  {\bf w}^{* N_n}, l)$.
Hence, if $\|k\|_{\cH} \le r$ and $\|l\|_{\cH}\le r'$, then
\begin{align}\label{eq.200323-4}
\bigl\|
D_l 
 \tilde{F}(h+\bullet)\vert_{\bullet =k} 
- D_l F^{\perp}_{n}(w, P_n(h)+\bullet) \vert_{\bullet =k}   \bigr\|_E 
&\le
\|  
\xi^{[1]} (T_{h+k} {\bf 0}, l) - \xi^{[1]}( T_{k+P_n(h)}  {\bf w}^{* N_n}, l)
\|_E
\nn\\
&\le 
C_{r, r', w, h} \{
\rho_{E^\prime} ( {\mathbf w}^{*N_n}, {\mathbf 0})
+  \| h- P_n(h)\|_{\cH} \}
\nn\\
&\to
0
\quad
\mbox{as $n \to \infty$.}
\nn
\end{align}
It is also easy to see from this that $k \mapsto 
D F^{\perp}_{n}(w, P_n(h)+\bullet)\vert_{\bullet =k}$ is continuous and 
\begin{equation}
\sup_{\|k\|_{\cH} \le r}
\bigl\|
D
 \tilde{F}(h+\bullet)\vert_{\bullet =k} 
- D F^{\perp}_{n}(w, P_n(h)+\bullet) \vert_{\bullet =k}   \bigr\|_{\cH \to E}
 \to 0 
 \quad
\mbox{as $n \to \infty$.}
\label{eq.200323-5}
\end{equation}
Convergence of the first order derivatives in 
\eqref{conv.200324} follows
immediately from \eqref{eq.200323-2} and \eqref{eq.200323-5}.

Next we calculate the second order derivatives.
We have
\begin{align*}
D^2_{l,l} \tilde{F}(\bar{P}_n(w)+\bullet) \vert_{\bullet =k}
&=
\xi^{[2]}(\bar{P}_n(w)+ k, l)
=
\xi^{[2]}( T_k {\bf w}^{[N_n]}, l).
\end{align*}
Due to the Lipschitz continuity of $\xi^{[2]}$ we mentioned in 
\eqref{eq.der.Lip.conti}, we have that, 
if $\|k\|_{\cH} \le r$ and $\|l\|_{\cH}\le r'$, then
\begin{align}\label{eq.200324-1}
\bigl\|
D^2_{l,l} \tilde{F}(\bar{P}_n(w)+\bullet) \vert_{\bullet =k} 
- \xi^{[2]}( T_k {\bf w}, l) \bigr\|_E 
&=
\| \xi^{[2]}( T_k {\bf w}^{[N_n]}, l)
- \xi^{[2]}({\mathbf L}(w), l) \|_E
\nn\\
&\le 
C_{r, r', w} \rho_{E^\prime} ( {\mathbf w}^{[N_n]}, {\mathbf L}(w))
\to 0
\quad
\mbox{as $n \to \infty$.}
\end{align}
By polarization, we can easily see that, for $\|k\|_{\cH} \le r$ and 
$\|l\|_{\cH}\vee \|\hat{l}\|_{\cH} \le r'$,
\begin{align}%
\bigl\|
4D^2_{l,\hat{l}} \tilde{F}(\bar{P}_n(w)+\bullet) \vert_{\bullet =k} 
-  \{\xi^{[2]}( T_k {\mathbf L}(w), l+\hat{l}) + 
\xi^{[2]}( T_k {\mathbf L}(w), l-\hat{l})\}\bigr\|_E 
\nn\\
\le 
C_{r, r', w} \rho_{E^\prime} ( {\mathbf w}^{[N_n]}, {\mathbf L}(w))
\to 0
\quad
\mbox{as $n \to \infty$.}
\nn
\end{align}
This  uniform convergence in $(k, l)$ 
implies that
\[
D^2_{l,\hat{l}}F_{m\vee n}(w,\bullet))\vert_{\bullet =k} 
= D^2_{l,\hat{l}} \Phi (T_{\bullet} {\mathbf L}(w))
\vert_{\bullet =k} 
=
\frac14 \{\xi^{[2]}( T_k {\mathbf L}(w), l+\hat{l}) + 
\xi^{[2]}( T_k {\mathbf L}(w), l-\hat{l})\}
\]
and $k \mapsto D  F_{m\vee n}(w,\bullet))\vert_{\bullet =k} 
$ is continuous and
\begin{align}
\lefteqn{
\sup_{\|k\|_{\cH} \le r}
\|D^2 \tilde{F}(\bar{P}_n(w)+\bullet)\vert_{\bullet =k} 
- D^2 F_{m\vee n}(w,\bullet))\vert_{\bullet =k}
\|_{\cH \times \cH \to E}
}
\nn\\
&\le 
\frac14
\sup_{\|k\|_{\cH} \le r}\sup_{\|l\|_{\cH} \vee \|\hat{l}\|_{\cH} \le 1}
\{
 \| \xi^{[2]}( T_k {\bf w}^{[N_n]}, l+\hat{l})
   - \xi^{[2]}( T_k {\mathbf L}(w), l+\hat{l}) \|_E
\nn\\
&\qquad +
 \| \xi^{[2]}( T_k {\bf w}^{[N_n]}, l-\hat{l})
   - \xi^{[2]}( T_k {\mathbf L}(w), l-\hat{l}) \|_E
   \}
 \to 0 
 \quad
\mbox{as $n \to \infty$,}
\label{eq.200324-4}
\end{align}
where $\| \,\cdot\, \|_{\cH \times \cH 
\to E}$ stands for the standard
norm for bounded bilinear maps from $\cH\times \cH $ to $E$.

Since $F_n^\perp (w,k) 
= \Phi (T_{k -w^{[N_k]}}   {\mathbf L}(w) )$,
we see that 
$D_l F_n^\perp (w,\bullet)\vert_{\bullet =k}
=
\xi^{[2]}( T_{k}  {\bf w}^{* N_n}, l)$.
Hence, if $\|k\|_{\cH} \le r$ and $\|l\|_{\cH}\vee \|\hat{l}\|_{\cH}\le r'$, then
\begin{align}\label{eq.200324-6}
\lefteqn{
\bigl\|
D^2_{l,\hat{l}}
 \tilde{F}(h+\bullet)\vert_{\bullet =k} 
- D^2_{l,\hat{l}}
 F^{\perp}_{n}(w, P_n(h)+\bullet) \vert_{\bullet =k}   \bigr\|_E 
 }
 \nn\\
&\le
\|  
\xi^{[2]} (T_{h+k} {\bf 0}, l+\hat{l}) - 
\xi^{[2]}( T_{k+P_n(h)}  {\bf w}^{* N_n}, l+\hat{l})
\|_E
  \nn\\
&\qquad  +
     \|  
\xi^{[2]} (T_{h+k} {\bf 0}, l -\hat{l}) - 
\xi^{[2]}( T_{k+P_n(h)}  {\bf w}^{* N_n}, l - \hat{l})
\|_E
\nn\\
&\le 
C_{r, r', w, h} \{
\rho_{E^\prime} ( {\mathbf w}^{*N_n}, {\mathbf 0})
+  \| h- P_n(h)\|_{\cH} \}
\quad 
\to 0
\quad
\mbox{as $n \to \infty$.}
\nn
\end{align}
It is also easy to see from this that 
$k \mapsto 
D^2 F^{\perp}_{n}(w, P_n(h)+\bullet)\vert_{\bullet =k}$ 
is continuous and 
\begin{equation}
\sup_{\|k\|_{\cH} \le r}
\bigl\|
D^2
 \tilde{F}(h+\bullet)\vert_{\bullet =k} 
- D^2 F^{\perp}_{n}(w, P_n(h)+\bullet) \vert_{\bullet =k}   
\bigr\|_{\cH\times \cH \to E}
 \to 0 
 \quad
\mbox{as $n \to \infty$.}
\label{eq.200324-7}
\end{equation}
Convergence of the second order derivatives  in 
\eqref{conv.200324} follows immediately from 
\eqref{eq.200324-4} and \eqref{eq.200324-7}.
This completes the proof.
\end{proof}

\section{Summary of our result
and comparison with preceding ones}\label{sec.cmpr}

Finally, we summarize our main result 
on positivity of the law of the solution of RDEs in the next theorem.
As in Propositions \ref{pr.Kreg} and \ref{pr.Kdiff}, 
we denote by $y= (y_t({\bf w}))_{0 \le t \le T}$ 
the unique solution of 
RDE \eqref{rde.def} with ${\bf x}$ being replaced by 
the Gaussian rough path ${\bf w}$.
Also,
$\cK=\{K_n\}^{\infty}_{n=1}$ is the same 
non-decreasing, countable exhaustion of $\cH$ as in these propositions.
\begin{theorem}\label{thm.pstv}
Assume {\bf (CYR)} and that 
 $V_i$ is of $C_b^{\infty}$ for all $0 \le i \le d$. 
 Let $t \in (0,T]$ and assume further that $y_t= y_t({\bf w})$
 is non-degenerate in the Malliavin sense \eqref{cond.nondeg}.
 Let $G\colon\cW\rightarrow [0,+\infty)$ be functions which 
 is infinitely differentiable in the sense of the Malliavin calculus.   
 Then, there exists a unique non-negative function
 $f_t\in C^{\infty}(\mathbb{R}^e, {\mathbb R})$ with the properties that $f_t$ has rapidly decreasing derivatives of all orders and 
\[
\int_{\cW}  \phi (y_t({\bf w}) ) G (w)
\mu (dw)
=\int_{\mathbb{R}^e}\phi (z)  f_t (z) dz, 
\qquad
\phi \in C_b(\mathbb{R}^e, \mathbb R).
\]
Assume in addition that $G$ is 
$\cK$-regular with its $\cK$-regularization $\tilde{G}$.
Then,  for $z\in \mathbb{R}^e$, the following  are equivalent:
\begin{itemize}
\item
$f_t (z)>0$.
\item
There exists $h \in \cH$ such that 
$D\Psi (h)_t\colon \cH\rightarrow \mathbb{R}^e$ has rank $e$, 
$\Psi (h)_t=z$ and 
$\tilde{G}(h)>0$. 
\end{itemize}
Here, $\Psi$ is the solution map for the corresponding 
Young ODE \eqref{ode.def}.
 \end{theorem}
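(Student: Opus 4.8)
The plan is to obtain Theorem \ref{thm.pstv} as an application of the abstract result Theorem \ref{aks>0}, taking $F$ to be the time-$t$ evaluation of the RDE solution and feeding in the $\cK$-regularity proved in Propositions \ref{pr.Kreg} and \ref{pr.Kdiff}. Let $\mathrm{ev}_t\colon C^{0,p-{\rm var}}({\mathbb R}^e)\to{\mathbb R}^e$, $y\mapsto y_t$, be the bounded linear evaluation map at $t\in(0,T]$, and set $F := \mathrm{ev}_t\circ\Phi\circ{\bf L}\colon\cW\to{\mathbb R}^e$, a Borel-measurable, everywhere-defined map which, as noted at the start of Section \ref{sec.core}, agrees $\mu$-a.s. with the Malliavin-calculus solution $y_t({\bf w})$. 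For the first assertion I would verify the hypotheses of the first half of Theorem \ref{aks>0}: under {\bf (CYR)} and $V_i\in C_b^\infty$, Malliavin calculus for RDEs driven by Gaussian rough paths (\cite{chlt,ina2}) shows that $y_t({\bf w})$, hence $F$, is infinitely differentiable in the Malliavin sense; $G$ is so by hypothesis; and the non-degeneracy \eqref{cond.nondeg} of the Malliavin covariance of $y_t$ is assumed. Theorem \ref{aks>0} then produces the unique non-negative $f_t\in C^\infty({\mathbb R}^e,{\mathbb R})$ with rapidly decreasing derivatives of all orders, together with the stated integration formula.

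For the equivalence I would check that $F$ is twice $\cK$-regularly differentiable. By Proposition \ref{pr.Kdiff}, $\Phi\circ{\bf L}\colon\cW\to C^{0,p-{\rm var}}({\mathbb R}^e)$ is twice $\cK$-regularly differentiable with $\cK$-regularization $\Psi\restriction_\cH$. Post-composition with a bounded linear operator $A$ between separable Banach spaces preserves this property: one takes the witnessing maps $(A\circ H)_n := A\circ H_n$ and the regularization $A\circ\tilde H$, and each of the quantitative conditions \eqref{rhoe}--\eqref{rhoe3} is inherited because $A$ is Lipschitz with constant $\|A\|_{\mathrm{op}}$ for the relevant metrics and contracts $C_b^l$-norms on balls by the same factor (here $D(A\circ H)=A\circ DH$ and all higher derivatives of $A$ vanish). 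Applying this with $A=\mathrm{ev}_t$ yields that $F$ is twice $\cK$-regularly differentiable with $\cK$-regularization $\tilde F := \mathrm{ev}_t\circ\Psi\restriction_\cH$, i.e. $\tilde F(h)=\Psi(h)_t$; and $G$ is $\cK$-regular with regularization $\tilde G$ by the extra hypothesis.

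Finally I would invoke the second half of Theorem \ref{aks>0}, with the point of ${\mathbb R}^e$ taken to be $z$: $f_t(z)>0$ holds if and only if there is $h\in\cH$ such that $D\tilde F(h)\colon\cH\to{\mathbb R}^e$ has rank $e$, $\tilde F(h)=z$, and $\tilde G(h)>0$. Since $\tilde F(h)=\Psi(h)_t$ and, by the chain rule and linearity of $\mathrm{ev}_t$, $D\tilde F(h)=\mathrm{ev}_t\circ D\Psi(h)=D\Psi(h)_t$, these conditions are precisely that $D\Psi(h)_t$ has rank $e$, $\Psi(h)_t=z$, and $\tilde G(h)>0$, which is the asserted dichotomy. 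The substantive work — twice $\cK$-regular differentiability of the solution map, at the level of the path space — has already been done in Proposition \ref{pr.Kdiff}, so this theorem is essentially a corollary; the only points requiring care are the routine (but not entirely trivial) transfer of $\cK$-regular differentiability along the bounded linear map $\mathrm{ev}_t$, and the identification of $\tilde F$ with $h\mapsto\Psi(h)_t$ so that the criterion is phrased via the Young ODE \eqref{ode.def}.
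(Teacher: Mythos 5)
Your proposal is correct and follows exactly the route the paper takes: Malliavin smoothness of $y_t({\bf w})$ from \cite{ina2}, non-degeneracy by hypothesis, and then Theorem \ref{aks>0} applied to $F=\mathrm{ev}_t\circ\Phi\circ{\bf L}$, whose twice $\cK$-regular differentiability is inherited from Proposition \ref{pr.Kdiff} via the bounded linear evaluation map (the step the paper dispatches with ``of course, it immediately implies''). You merely spell out the details the paper leaves implicit, in particular the identification $\tilde F(h)=\Psi(h)_t$ and $D\tilde F(h)=D\Psi(h)_t$.
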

 
  \begin{proof}
It is shown in \cite{ina2} that if we assume {\bf (CYR)} and that 
 $V_i$ is of $C_b^{\infty}$, then $y_t({\bf w}) $ is 
 infinitely differentiable in the sense of the Malliavin calculus
 for every $t$. 
The rest of the proof follows immediately 
from  Proposition \ref{pr.Kdiff} and Theorem \ref{aks>0}.
    \end{proof}

 \begin{remark} \label{rem.non-deg}   
 We make a remark on the non-degeneracy of $y_t({\bf w})$,
 which is the key assumption of the above theorem.
 First, we recall H\"ormander's bracket generating condition 
 on $V_i$'s.
 Set $\cV_1 =\{ V_1, \ldots, V_d\}$ and 
 $\cV_k =\{ [V_i, W ] \colon 0 \le i \le d, W \in \cV_{k-1} \}$
 for $k \ge 2$.
We say $\{V_0, V_1,\cdots, V_d\}$ satisfies 
the H\"ormander condition at the starting point 
$a\in \mathbb{R}^e$ if 
$\{ W(a) \colon W \in \cup_{k=1}^\infty \cV_{k}\}$ spans
$\mathbb{R}^e$.

Non-degeneracy of $y_t({\bf w})$ under this condition 
at the starting point $a$ was proved in \cite{chlt, got1}
for a class of Gaussian processes,
wherein the assumption on $w$ is stronger than {\bf (CYR)},
but examples still include fBM with $H \in (1/4, 1/2]$. 
     \end{remark}

\begin{remark}\label{rem.kikaku}
Let us compare Theorem \ref{thm.pstv} above with 
two preceding results \cite{bnot, got2},
in which $w$ is fBM with $H \in (1/4, 1/2]$.
In this case, we have Remark \ref{rem.non-deg} above
for the non-degeneracy assumption in Theorem \ref{thm.pstv}.
\begin{itemize}
\item[(1)]~Suppose that $\{ V_i \colon 1 \le i \le d\}$ satisfies 
the ellipticity condition everywhere, i.e., 
$\{V_i (z) \colon 1 \le i \le d \}$ spans 
$\mathbb{R}^e$ at every $z \in \mathbb{R}^e$.
In this case, $D\Psi (h)_t\colon \cH\rightarrow \mathbb{R}^e$ has rank $e$ for every $h\in \cH$.
By standard argument, one can easily find for every $z$
a $C^1$-path $h$ such that $\Psi (h)_t=z$. 
Since a $C^1$-path belongs to Cameron-Martin space
for fBM, we can use Theorem \ref{thm.pstv}
to conclude that 
$f_t (z) >0$ for all $z\in\mathbb{R}^e$ and $t >0$ when 
$G \equiv 1$.
Therefore, Theorem \ref{thm.pstv} implies the positivity 
result in \cite[Theorem 1.4]{bnot}.
\item[(2)]~
We explain that Theorem \ref{thm.pstv} implies 
the positivity result in \cite{got2}, in which $V_0 \equiv 0$
and $\{ V_i \colon 1 \le i \le d\}$ is assumed to  satisfy
the uniform H\"ormander condition on $\mathbb{R}^e$.
By using Sard's theorem and Chow-Radevski's theorem,
they proved in \cite{got2} that, for every $z\in\mathbb{R}^e$,
there exists $h \in \cH$ such that 
$D\Psi (h)_t\colon \cH\rightarrow \mathbb{R}^e$ has rank $e$, 
$\Psi (h)_t=z$.
(This part seems non-trivial.)
So, we can use Theorem \ref{thm.pstv}
to recover the positivity result in \cite{got2}, namely, 
$f_t (z) >0$ for all $z\in\mathbb{R}^e$ and $t >0$ when 
$G \equiv 1$.
\item[(3)]~Other reasons why Theorem \ref{thm.pstv}
are more general than the results in \cite{bnot, got2}
are as follows:
\begin{itemize}
\item[(a)]~In \cite{bnot, got2}, only the case $G \equiv 1$ is considered.
\item[(b)]~In \cite{bnot, got2}, the Gaussian process $w$ 
is restricted to fBM with $H \in (1/4, 1/2]$ only. 
\item[(c)]~
The results in \cite{bnot, got2} prove
everywhere-positivity for the density.
The case $f_t (z) =0$ was not studied there unlike in Theorem \ref{thm.pstv} above.
\end{itemize}
\end{itemize}
\end{remark}   
%
%

\noindent
{\bf Acknowledgement:}~
The first named author
 was partially supported by JSPS KAKENHI (JP20H01807) and Grant-in-Aid for JSPS Fellows (JP18F18314).
The second named author was partially supported by the NSF of China (11802216), the China Postdoctoral
Science Foundation (2019M651334) and JSPS Grant-in-Aid for JSPS Fellows (JP18F18314). 


\bigskip
\begin{flushleft}
  \begin{tabular}{ll}
    Yuzuru \textsc{Inahama}
    \\
    Faculty of Mathematics,
    Kyushu University,
    \\
    744 Motooka, Nishi-ku, Fukuoka, 819-0395, Japan.
    \\
    Email: {\tt inahama@math.kyushu-u.ac.jp}
  \end{tabular}
\end{flushleft}

\begin{flushleft}
  \begin{tabular}{ll}
    Bin \textsc{Pei}
    \\
School of Mathematical Sciences,
    Fudan University, 
        \\
        Shanghai, 200433, China
    \\
    \quad and 
    \\
       Faculty of Mathematics,
    Kyushu University,
    \\
    744 Motooka, Nishi-ku, Fukuoka, 819-0395, Japan.    
    \\
    Email: {\tt binpei@hotmail.com}
  \end{tabular}
\end{flushleft}


\begin{thebibliography}{00}

\bibitem{aks}
Aida, S.; Kusuoka, S.; Stroock, D.; 
On the support of Wiener functionals. 
Asymptotic problems in probability theory: Wiener functionals and asymptotics (Sanda/Kyoto, 1990), 3--34, 
Pitman Res. Notes Math. Ser., 284, Longman Sci. Tech., Harlow, 1993. 



\bibitem{bnot}
Baudoin, F.; Nualart, E.; Ouyang, C.; Tindel, S.;
On probability laws of solutions to differential systems driven by a fractional Brownian motion. 
Ann. Probab. 44 (2016), no. 4, 2554--2590. 


\bibitem{bl2}
Ben Arous, G.; L\'eandre, R.;
D\'ecroissance exponentielle du noyau de la chaleur sur la diagonale. II. 
Probab. Theory Related Fields 90 (1991), no. 3, 377--402. 


\bibitem{ber}
Berger, M. S.;
Nonlinearity and functional analysis. 
Academic Press, New York-London, 1977.
 


\bibitem{chlt}
Cass, T.; Hairer, M.; Litterer, C.; Tindel, S.;
Smoothness of the density for solutions to Gaussian rough differential equations. 
Ann. Probab. 43 (2015), no. 1, 188--239. 

 
 
 
 
\bibitem{fvbk}
Friz, P. K.; Victoir, N. B.; 
Multidimensional stochastic processes as rough paths.
Cambridge University Press, Cambridge, 2010.

\bibitem{got2}
Geng, X.; Ouyang, C.; Tindel, S.;
Precise local estimates for hypoelliptic differential equations driven by fractional Brownian motions.
Preprint. arXiv:1907.00171.


\bibitem{got1}
Gess, B.; Ouyang, C.; Tindel, S.;
Density bounds for solutions to differential equations driven by Gaussian rough paths.
Preprint. arXiv:1712.02740.



\bibitem{ina2}
Inahama, Y.; 
Malliavin differentiability of solutions of rough differential equations. 
J. Funct. Anal. 267 (2014), no. 5, 1566--1584. 


\bibitem{ina1}
Inahama, Y.; 
Short time kernel asymptotics for Young SDE by means of Watanabe distribution theory. 
J. Math. Soc. Japan 68 (2016), no. 2, 535--577. 



\bibitem{ll}
Li, X.-D.; Lyons, T. J.; 
Smoothness of It\^{o} maps and diffusion processes on path spaces, I, 
Ann. Sci. \'{E}cole Norm. Sup. (4), 39 (2006), no. 4, 649--677.

\bibitem{gross1}
Gross, L.; Abstract Wiener Spaces. Berkeley Symp. on Math. Statist. and Prob.
Proc. Fifth Berkeley Symp. on Math. Statist. and Prob., Vol. 2, Pt. 1 (Univ. of Calif. Press, 1967), 31-42
\end{thebibliography}
\end{document}